\newtheorem*{theorem*}{Theorem}
\newcommand{\E}{\mathbb{E}}
\newcommand{\N}{\mathbb{N}}
\newcommand{\Pb}{\mathbb{P}}
\newcommand{\R}{\mathbb{R}}
\newcommand{\Z}{\mathbb{Z}}
\renewcommand{\L}{\mathscr{L}}
\newcommand{\LL}{\mathbb{L}}
\newcommand{\G}{\hat G}
\newcommand{\A}{\mathcal{A}}
\newcommand{\F}{\mathcal{F}}
\newcommand{\dd}{{\mathrm d}}
\renewcommand{\d}{\dd}
\renewcommand{\leq}{\leqslant}
\renewcommand{\le}{\leqslant}
\renewcommand{\geq}{\geqslant}
\renewcommand{\ge}{\geqslant}
\newcommand{\I}{\mathds{1}}
\newcommand{\sigmainfty}{\Sigma^{^{\scriptscriptstyle*}}}
\newcommand{\sigmainftybar}{\Sigma^{^{\scriptscriptstyle*}}_{\scriptscriptstyle0}}
\newcommand{\sigmabar}{\Sigma_{\scriptscriptstyle0}}
\begin{document}

\section{Introduction}
\label{sec:intro}

The contact process is a stochastic model for the spread of an infection among the members of a population.
Individuals are identified with points of a lattice ($\Z$ in our case) and the process evolves according to the following rules.
An infected individual will infect each of its neighbors at rate $\lambda>0$, and recover at rate $1$.
This evolution defines an interacting particle system whose state at time $t$ is a subset $\eta_t \subseteq \Z$, or equivalently an element $\eta_t \in \{0,1\}^\Z$.
We interpret that individual $x\in\Z$ is infected at time $t$ if $\eta_t(x)=1$, and is otherwise healthy.

The contact process is one of the simplest particle systems that exhibits a phase transition.
There exists a critical value $0<\lambda_c<\infty$ such that the probability that a single individual infects infinitely many others is zero when $\lambda < \lambda_c$ and is positive when $\lambda > \lambda_c$.
See~\cite{liggett-85} for the precise definition of the model.

For $A\subseteq \Z$, let $(\eta_t^A)_{t\ge 0}$ denote the process starting from $\eta_0=A$.
When $A$ is random and has distribution~$\mu$, we denote the process by $\eta_t^\mu$.
We also write $\eta_t^x$ when $A={\{x\}}$.
Let
\[
\Sigma
=
\{ A \subseteq \Z : A \cap \N \text{ is finite} \}
\quad
\text{ and }
\quad
\sigmainfty
=
\{ A \in \Sigma : A\cap -\N \text{ is infinite} \}
.
\]
Notice that both $\Sigma$ and $\sigmainfty$ are invariant for the contact process dynamics.
For $A\in\Sigma$, the \emph{contact process seen from the rightmost point} is the Markov process on~$\Sigma$ defined by
\[
\zeta_t^A = \eta_t^A - \max \eta_t^A
\]
if
$\eta_t^A \ne \emptyset$
and
$\zeta_t^A = \emptyset$
otherwise.
In fact, defining
\[
\sigmabar = \{A \in \Sigma : \max A=0 \text{ or } A = \emptyset \}
\quad
\text{and}
\quad
\sigmainftybar =\{A \in \sigmabar : A \text{ is infinite} \}
,
\]
the state-space of the process~$(\zeta_t)_t$ is $\sigmabar$, and the subset $\sigmainftybar$ is invariant.

Durrett~\cite{durrett-84} proved the existence of an invariant measure for $\zeta_t$ when $\lambda \geqslant \lambda_c$ on~$\sigmainftybar$.
In the supercritical phase, Galves and Presutti~\cite{galves-presutti-87} proved that the invariant measure~$\mu$ is unique for each $\lambda$, and that $\zeta_t^A$ converges in distribution to~$\mu$ for any $A\in\sigmainftybar$.
Kuczek~\cite{kuczek-89} provided an alternative proof and showed an invariance principle for the position of the rightmost infected site.
Uniqueness of~$\mu$ and convergence in distribution was extended to the critical case by Cox, Durrett, and Schinazi~\cite{cox-durret-schinazi-91}.
While some of these results were stated for the contact process and some others for planar oriented percolation, the arguments in~\cite{galves-presutti-87,kuczek-89,cox-durret-schinazi-91} can be translated effortlessly from one model to the other, which is not always the case.

The behavior in the subcritical phase is quite different.
Schonmann~\cite{schonmann-87} showed that in this phase, planar oriented percolation seen from its rightmost point does not have any invariant measure on~$\sigmainftybar$.
This result was extended to the contact process by Andjel, Schinazi, and Schonmann~\cite{andjel-schinazi-schonmann-90}.

In this paper we show that, despite non-existence of stationary measures, the subcritical contact process seen from the rightmost point does converge in distribution.
The limiting measure is quasi-stationary and is supported on configurations that contain finitely many infected sites.

This extends an analogous result for subcritical planar oriented percolation~\cite{andjel-14}.
The proof in~\cite{andjel-14} used quite heavily that in the discrete setting the speed of the propagation of the infection is bounded by $1$ almost surely.
Since this does not hold for the contact process, there is no simple adaptation of that proof to this case.
The difficulty is mostly due to the fact that unlikely events may have considerable influence when we observe an event of small probability.

Hereafter we assume that $0 < \lambda < \lambda_c$ is fixed.

For $\zeta_0\sim\mu$, define $\tau^\mu = \inf\{t\ge 0 : \zeta_t^\mu =\emptyset \}$.
Define $\tau^A$ and $\tau^x$ analogously.
We say that~$\mu$ is a \emph{quasi-stationary distribution} on $\sigmabar$ if for every $t>0$ the law of $\zeta_t^\mu$ satisfies
\[
 \L (\zeta_t^\mu \, |\, \tau^\mu >t) = \mu.
\]
By the Markov property, the above identity implies that $\L (\tau^\mu \, |\, \tau^\mu >t) = \L (\tau^\mu + t)$, so if~$\mu$ is quasi-stationary, $\tau^\mu$ is exponentially distributed with some parameter $\alpha=\frac{1}{\E[\tau^\mu]} \ge 0$.
We say that ${\nu}$ is \emph{minimal} if $\E[\tau^{\nu}]$ is minimal among all quasi-stationary distributions.
Notice that stationary is a particular case of quasi-stationary with $\alpha=0$.
See~\cite{vandoorn-pollet-13,collet-martinez-sanmartin-13,meleard-villemonais-12} for an introduction on this topic.

\begin{proposition}
\label{prop:yaglom}
The subcritical contact process seen from the rightmost point $(\zeta_t)_{t\geqslant 0}$ has a unique minimal quasi-stationary distribution~${\nu}$.
This measure~$\nu$ is supported on finite configurations.
Moreover it satisfies the Yaglom limit
\[
\L ( \zeta_t^A \,|\, \tau^A>t ) \to {\nu} \text{ as } t\to\infty
,
\]
for any finite configuration $A\subseteq \Z$.
\end{proposition}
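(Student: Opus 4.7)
\emph{Proof plan.} Starting from a finite $A$, $(\zeta_t^A)$ is a continuous-time Markov chain on the countable set of finite elements of $\sigmabar$, irreducible on the nonempty ones and absorbed at $\emptyset$. The problem therefore fits the general framework for Yaglom limits of countable-state chains with absorption (see~\cite{vandoorn-pollet-13,collet-martinez-sanmartin-13,meleard-villemonais-12}), and the plan is to supply the three standard ingredients: an exponential extinction rate, tightness of the one-parameter family of conditioned laws, and uniqueness of the corresponding QSD.

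The extinction rate $\alpha:=-\lim_{t\to\infty} t^{-1}\log\Pb(\tau^0>t)$ exists by super-multiplicativity of $\Pb(\tau^0>t)$ (the Markov property together with monotonicity in the initial set), it is strictly positive by subcriticality, and a restart argument yields the matching lower bound $\Pb(\tau^A>t)\ge c_A\,e^{-\alpha t}$ for every finite $A$. The crux is then tightness of $\{\L(\zeta_t^A\mid\tau^A>t)\}_{t\ge 0}$, which via this lower bound reduces to showing
\[
\Pb(\zeta_t^A\ni -k) \le C_A\,e^{-\alpha t - ck}, \qquad k\ge 1.
\]
This is the main technical obstacle. The event $\{\zeta_t^A\ni -k\}$ is defined relative to the random position of the rightmost infected site $r_t=\max\eta_t^A$, so one must bound the probability that two active time-space paths in the graphical construction begin in $A\times\{0\}$, end at time $t$, and terminate at two sites separated by at least $k$. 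I would combine the exponential decay of subcritical two-point connectivities with a BK-type decoupling isolating the two paths; this is exactly where the bounded-speed argument of~\cite{andjel-14} fails and genuinely new input is needed.

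Given tightness, along any $t_n\to\infty$ one extracts a weak limit $\nu$ supported on finite configurations. Passing to the limit in the Markov identity
\[
\Pb\bigl(\zeta_{t+s}^A\in\cdot\,\big|\,\tau^A>t+s\bigr) = \frac{\sum_B \Pb(\zeta_t^A=B\mid\tau^A>t)\,\Pb(\zeta_s^B\in\cdot,\tau^B>s)}{\sum_B \Pb(\zeta_t^A=B\mid\tau^A>t)\,\Pb(\tau^B>s)},
\]
where tightness justifies the interchange with the sum over $B$, together with the ratio limit $\Pb(\tau^A>t+s)/\Pb(\tau^A>t)\to e^{-\alpha s}$ coming from the first step, identifies $\nu$ as a QSD with $\tau^\nu\sim\mathrm{Exp}(\alpha)$, hence minimal in the sense of the proposition. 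Irreducibility on finite nonempty configurations combined with the matching exponential bounds yields uniqueness of this minimal QSD by the standard Perron-Frobenius ($R$-theory) arguments for substochastic irreducible semigroups; every subsequential limit must therefore coincide with $\nu$, yielding the Yaglom convergence for every finite $A$.
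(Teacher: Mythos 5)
Your overall framework (exponential extinction rate, tightness of the conditioned laws, subsequential limits, uniqueness via $R$-theory) is the standard blueprint, but your plan leaves the two essential technical steps unresolved, and both are precisely where the difficulty lies.

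First, the tightness estimate $\Pb(\zeta_t^A\ni -k)\le C_A\,e^{-\alpha t-ck}$ is the whole ballgame, and you say so yourself (``this is exactly where \ldots genuinely new input is needed''). Your proposed route -- BK plus two-point exponential decay -- is unlikely to produce a bound with the correct prefactor $e^{-\alpha t}$: a naive BK decoupling of two open paths from $A\times\{0\}$ to $L_t$ yields something on the order of $\Pb(\tau^A>t)^2\approx e^{-2\alpha t}$, which is too small, whereas without decoupling you have no control on the two paths reusing the same infection. The paper sidesteps this entirely. It never proves a pointwise tightness estimate on the diameter of $\zeta_t^A$; instead it discretizes time at a step $\psi$ chosen so that $\E|\eta^0_\psi|<1$ (possible because $\int_0^\infty\E|\eta^0_t|^q\,\dd t<\infty$ by exponential decay), and then uses a multiplicative moment bound together with the $L^q$ law of large numbers (Proposition~\ref{prop:recurrence}) to show that, started from a set of size at most $K$, the chain $|\eta^A_{n\psi}|$ cannot stay above $K$ for $n$ consecutive steps except with probability $M\rho^n$, for $\rho<R^{-1}$. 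That is exactly condition~(H1) of Ferrari--Kesten--Mart\'inez, and combined with the easy~(H2) and~(H3) it gives $R$-positivity of the discretized semigroup with summable left eigenvector, from which the Yaglom limit follows by the Vere-Jones ratio theorem. In other words, the paper trades your pointwise diameter bound for a recurrence-to-small-sets bound, which is accessible by elementary moment arguments.

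Second, your claim that ``a restart argument yields the matching lower bound $\Pb(\tau^A>t)\ge c_A e^{-\alpha t}$'' is not innocuous. Super-multiplicativity gives you the \emph{upper} bound $\Pb(\tau^0>t)\le e^{-\alpha t}$ (by Fekete, $\alpha=-\sup_t t^{-1}\log\Pb(\tau^0>t)$), and a matching lower bound with the exact rate $\alpha$ is essentially equivalent to $\alpha$-positivity, which is what the whole proof is after. The same remark applies to the final paragraph: uniqueness of the minimal QSD via ``$R$-theory'' is not a free consequence of irreducibility; it requires $R$-positivity of the transition kernel, so invoking it there is circular unless you have already established positivity by other means. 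In short, your proposal is a correct outline of a classical strategy, but the two places where it says ``this is hard'' or ``this follows by standard arguments'' are exactly the content of the paper's proof, and the specific estimates you propose for those gaps do not obviously close them.
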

An analogous result was obtained by Ferrari, Kesten, and Martínez~\cite{ferrari-kesten-martinez-96} for a class of probabilistic automata that includes planar oriented percolation.
The main step of their proof is to show that the transition matrix is $R$-positive with left eigenvector~${\nu}$ summable.
In our proof we show that the contact process observed at discrete times falls in that class, and then apply standard theory of $\alpha$-positive continuous-time Markov chains to obtain the Yaglom limit.
In Section~\ref{sec:yaglom} we state and prove a more general version of the above proposition, valid on~$\Z^d$.

We finally state our main result.

\begin{theorem}
\label{thm:2}
For every infinite initial configuration~$A \subseteq -\N$, the subcritical contact process seen from the rightmost point $\zeta_t^A$ converges in distribution to ${\nu}$ as $t\to\infty$.
\end{theorem}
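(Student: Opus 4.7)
The plan is to reduce Theorem~\ref{thm:2} to Proposition~\ref{prop:yaglom} by comparing $\zeta_t^A$ for infinite~$A$ to the process started from a suitable (random, time-dependent) finite configuration. Two obstacles must be handled. In continuous time the propagation speed is unbounded, so the influence of the ``far'' part of~$A$ can only be controlled via the exponential decay of the subcritical connection probabilities; and a naive finite truncation $A_N := A \cap [-N, 0]$ does not work, because $\zeta_t^{A_N}$ dies with probability tending to~$1$ whereas $\zeta_t^A$ remains nonempty (the latter following from a far-apart-independence argument applied to the events $\{\tau^x > t\}$ for widely spaced $x \in A$).

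A first ingredient is the exponential decay of the subcritical range, namely $\Pb(\exists\, s \ge 0 : y \in \eta_s^x) \le C e^{-\gamma|y - x|}$ for some $C, \gamma > 0$. Via a union bound this shows that trajectories originating very far from the current position of the rightmost point $r_t = \max \eta_t^A$ have negligible effect on the configuration in any fixed window behind~$r_t$. A subtle point is that $r_t$ itself can drift far from the origin, so the localization must be formulated relative to~$r_t$, not relative to a fixed site.

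Next, to bypass the extinction of naive finite truncations, I would use a two-scale coupling. Pick an intermediate time $s = s(t)$ with $s$ and $t - s$ both tending to infinity, and at time~$s$ extract from $\eta_s^A$ a random finite configuration~$B$, shifted so that $\max B = 0$. Provided~$B$ captures the effective ancestors of~$r_t$, the localization estimate shows that $\zeta_t^A \cap [-K, 0]$ agrees with $\zeta_{t-s}^B \cap [-K, 0]$ with probability at least $1 - \epsilon$. Proposition~\ref{prop:yaglom} then identifies the limit as~$\nu$. I expect the main difficulty to be constructing the random~$B$ so that it simultaneously (i)~captures the ancestors of~$r_t$ in the graphical construction, (ii)~has survival probability $\Pb(\tau^B > t - s)$ large enough that the Yaglom limit gives a comparable law, and (iii)~is close enough to a typical $\nu$-sample that Proposition~\ref{prop:yaglom} applies uniformly in~$B$. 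This is precisely where the unbounded speed of the continuous-time process prevents a direct adaptation of the finite-range discrete-time argument of~\cite{andjel-14}.
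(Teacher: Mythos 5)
Your plan correctly identifies the two preliminary obstacles (unbounded propagation speed in continuous time, and extinction of a naive finite truncation) and correctly proposes to attack them via the exponential decay of the subcritical range and via a random finite configuration extracted at an intermediate time. Those are indeed ingredients of the paper's proof. However, the plan stops short of the actual difficulty, and the part you flag as "the main difficulty" is formulated in a way that would not close the gap even if you carried it out.

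The central problem is the following. You propose to extract, at time $s$, a random finite configuration~$B$ that "captures the effective ancestors of~$r_t$." But the set of ancestors of the rightmost surviving site is determined by the Poisson marks in the \emph{whole} slab $[0,t]$, not only in $[0,s]$. Choosing $B$ this way therefore conditions on events in $(s,t]$, and $\zeta^B_{t-s}$ is no longer independent of the data used to define~$B$; the Yaglom limit $\L(\zeta^B_{t-s}\mid\tau^B>t-s)\to\nu$ cannot be invoked. This is exactly the point the introduction warns about ("the choice of $x$ as the \emph{first} surviving site brings more information than simply `$\tau^x>t$'"), and your proposal contains no mechanism to neutralize this extra conditioning. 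The paper's resolution is the \emph{break point}: a space-time point $(Y,R)$ on the rightmost surviving path with the property that no site of $L_0$ connects to the interval of width $2\beta t$ to its right. After conditioning on $(Y,R)$, on the local configuration $\mathcal A$ to the left, and on a mild "goodness" event, Lemma~\ref{lemma2} shows that the conditional law of $\zeta^A_t$ factorizes exactly into that of $\zeta^{\mathcal A}_{t-R}$ conditioned on $\{\mathbf 0\rightsquigarrow L_{t-R},\,G_0\}$, because the two conditioning events live in disjoint space-time regions (disjointness being enforced by the goodness requirement, which bounds the reach of $\lambda$-paths). That renewal structure, together with Proposition~\ref{prop-hasbreakpoints} showing a break point exists before time $t/2$ with probability tending to~$1$, is the missing key idea.

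Your item (iii), requiring $B$ to be "close to a typical $\nu$-sample," is also aimed in the wrong direction. Proposition~\ref{prop:yaglom} gives convergence for each fixed finite $A$, but what the proof actually needs is \emph{uniformity} of the conditioned law over all finite initial configurations $A'\in\sigmabar$, and this is obtained in Lemma~\ref{lemma4} by a BK-inequality comparison showing $\L(\zeta^{A'}_t\cap[-2t,0]\mid\mathbf 0\rightsquigarrow L_t)$ is asymptotically insensitive to~$A'$ — not by requiring $A'$ to resemble~$\nu$. So even granting a construction of~$B$, the way you propose to feed it into Proposition~\ref{prop:yaglom} would not produce the required estimate.
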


Theorem~\ref{thm:2} is proved in Section~\ref{sec:main.theorem} using Proposition~\ref{prop:yaglom}.
A natural attempt to get Theorem~\ref{thm:2} would be to consider the rightmost site $x\in-\N$ whose infection survives up to time~$t$, and simply apply Proposition~\ref{prop:yaglom} to the set~$\zeta^x_t$ of sites infected by~$x$ at time~$t$.
However, the choice of~$x$ as the \emph{first} surviving site brings more information than simply ``$\tau^x > t$''.
We define a sort of renewal space-time point in order to handle this extra information, and finally show that such point exists with high probability.

\medskip

Some of the main arguments in this paper come from the second author's thesis~\cite{ezanno12}.

\section{The set infected by an infinite configuration}
\label{sec:main.theorem}

In this section we prove Theorem~\ref{thm:2}.	
Subsection~\ref{sec:graphical.construction} describes the graphical construction of the contact process, and 
in Subsection~\ref{sec:fkgbk} we recall the FKG and BK inequalities for this construction.

In Subsection~\ref{sec:breakpoints} we introduce the definitions of a \emph{good} space-time point, and a \emph{break point}, for fixed time~$t$.
The presence of a break point neutralizes the inconvenient information mentioned at the end of Section~\ref{sec:intro}, provided that all points nearby are good.
Choosing some constants correctly, it turns out that most points are good, even when considering rare regions such as those where an infection happens to survive until time~$t$.
We conclude this subsection with the proof of Theorem~\ref{thm:2}, assuming existence of break points.

In Subsection~\ref{sec:findbreakpoints} we prove that a break point can be found with high probability as $t\to\infty$.
To that end we use again geometric properties of good points and the exponential decay of subcritical contact processes.

\subsection{Graphical construction}
\label{sec:graphical.construction}

Define $\LL= \Z + \{ \pm 1/3 \}$ and let $U$ be a Poisson point process in $\R^2$ with intensity given by
\(
\big( \sum_{y\in\Z} \delta_y + \sum_{y\in\LL} \lambda\delta_y \big) \times \dd t
.
\)
Notice that $U \subseteq (\Z\cup \LL )\times \R$.
Let $(\Omega, \F, \Pb)$ be the underlying probability space.
For $x \in \Z$ we write $U^{x,x\pm1}=U\cap ( \{x\pm1/3\} \times \R )$ and $U^x=U\cap ( \{x\} \times \R )$.

Given two space-time points $(y,s)$ and $(x,t)$, we define a \emph{path from $(y,s)$ to $(x,t)$} as a finite sequence $(x_0,t_0), \dots, (x_k,t_k)$
with $x_0=x$, $x_k=y$, $s=t_0 \le t_1 \le \dots \le t_k = t$ with the following property.
For each $i=1,\dots,k$, the $i$-th segment $[(x_{i-1},t_{i-1}),(x_i,t_i)]$ is either \emph{vertical}, that is, $x_{i}=x_{i-1}$, or \emph{horizontal}, that is, $|x_{i}-x_{i-1}|=1$ and $t_{i}=t_{i-1}$.
Horizontal segments are also referred to as \emph{jumps}.
If all horizontal segments satisfy $t_i = t_{i-1} \in U^{x_{i-1},x_i}$ then such path is also called a \emph{$\lambda$-path}.
If, in addition, all vertical segments satisfy $(t_{i-1},t_{i}] \cap U^{x_i}=\emptyset$ we call it an \emph{open path} from $(y,s)$ to $(x,t)$.

The existence of an open path from $(y,s)$ to $(x,t)$ is denoted by
$ (y,s) \rightsquigarrow (x,t)$.
Also for two sets of the plane $C$, $D$ we use
$\{C \rightsquigarrow D\} = \{ (y,s) \rightsquigarrow (x,t) \text{ for some } (y,s)\in C, (x,t)\in D\}$.
We denote by $L_t$ the set $\Z \times \{t\} \subseteq \R^2$.

For $A \in \sigmabar$, we define $\eta_{s,t}^A \in \Sigma$ and $\zeta_{s,t}^A \in \sigmabar$ by
\begin{equation}
\label{def.contact.process}
\eta_{s,t}^A = \{ x : (A\times\{s\}) \rightsquigarrow (x,t)\}
,
\quad
\quad
\zeta_{s,t}^A = \eta_{s,t}^A - \max \eta_{s,t}^A
\end{equation}
if $\eta^A_{s,t}\ne \emptyset$ and $\zeta_{s,t}^A=\emptyset$ otherwise. When $s=0$ we omit it. We use $(\eta_t)$ and $(\zeta_t)$ for the processes defined by \eqref{def.contact.process}, that is $(\eta_t)$ is a contact process with parameter $\lambda$ and $(\zeta_t)$ is this process as seen from the rightmost infected site. Both of them are Markov. Note that if $A$ is finite, the same holds for $\eta_t^A$ and $\zeta_t^A$ for every $t\ge 0$ with total probability. Also note that $\emptyset$ is absorbing for both processes. When $A$ is a singleton~$\{y\}$ we write $\eta^y_t$ and $\zeta^y_t$.

\subsection{FKG and BK inequalities}
\label{sec:fkgbk}

We use $\omega$ for a \emph{configuration} of points in $\R^2$ and $\omega_\delta$, $\omega_\lambda$ for its restrictions to $\Z\times \R$ and $\LL\times \R$ respectively.
We write $\omega\ge \omega'$ if $\omega'_\lambda \subseteq \omega_\lambda$ and $\omega_\delta \subseteq \omega'_\delta$.

We slightly abuse the notation and identify a set of configurations~$Q$ with $(U^{-1} Q) \subseteq \Omega$.

A minor topological technicality needs to be mentioned.
Consider the space of locally finite configurations with the Skorohod topology: two configurations are close if they have the same number of points in a large space-time box and the position of the points are approximately the same.
In the sequel we assume that all events considered have zero-probability boundaries under this topology.
The important fact is that events of the form $\{E \rightsquigarrow F \}$ are measurable and satisfy this condition, as long as $E$ and $F$ are bounded Borel subsets of $\Z \times \R$.
See~\cite[Sect. 2.1]{bezuidenhout-grimmett-91} for a proof and precise definitions.

\begin{definition}
A set of configurations $Q$ is \emph{increasing} if $\omega \ge \omega' \in Q$ implies $\omega \in Q$.
\end{definition}

\begin{theorem*}
[FKG Inequality]
If $Q_1$ and $Q_2$ are increasing, then
\(
 \Pb(Q_1 \cap Q_2) \ge \Pb(Q_1)\Pb(Q_2).
\)
\end{theorem*}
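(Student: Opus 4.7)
The strategy is to discretize and reduce to the classical Harris correlation inequality for a product Bernoulli measure.

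First, using the Skorohod continuity assumption on the events in question, I would approximate $Q_1, Q_2$ by events depending only on $U \cap B$ for a bounded space-time box $B \subseteq (\Z \cup \LL) \times \R$, recovering the general case at the end by letting $B$ exhaust $(\Z \cup \LL) \times \R$.

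Fix such a $B$ and a mesh $\epsilon > 0$, and partition the portion of $B$ lying on $\delta$- and $\lambda$-lines into cells $C_1, \dots, C_N$ of height at most $\epsilon$, each contained in a single vertical line, so that each cell is labeled either $\delta$-type or $\lambda$-type. Set $X_k = \I\{U \cap C_k \ne \emptyset\}$; by the Poisson property the $X_k$ are independent Bernoulli. Define $f_i^\epsilon = \Pb(Q_i \mid X_1, \dots, X_N)$, viewed as a function on $\{0,1\}^N$. The key observation is that $f_i^\epsilon$ inherits the right monotonicity: conditioning on $X_k = 1$ in a $\lambda$-cell gives a larger $\omega_\lambda$ than conditioning on $X_k = 0$, so since $Q_i$ is increasing, $f_i^\epsilon$ is nondecreasing in such coordinates; a parallel coupling for $\delta$-cells shows that $f_i^\epsilon$ is nonincreasing in $X_k$ when $C_k$ is of $\delta$-type. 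After the change of coordinates $\tilde X_k = 1 - X_k$ on $\delta$-cells, which preserves independent Bernoulli marginals, both $f_1^\epsilon$ and $f_2^\epsilon$ become coordinatewise nondecreasing on $\{0,1\}^N$ under the resulting product measure.

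Harris's inequality then gives $\E[f_1^\epsilon f_2^\epsilon] \ge \E[f_1^\epsilon] \E[f_2^\epsilon] = \Pb(Q_1) \Pb(Q_2)$, using the tower property on the right-hand side. On the left, L\'evy's upward theorem yields $f_i^\epsilon \to \I_{Q_i}$ in $L^2(\Pb)$ as $\epsilon \downarrow 0$ and $B$ grows, where the zero-boundary hypothesis ensures that the limiting $\sigma$-algebra captures $\I_{Q_i}$ up to null sets. Passing to the limit yields $\Pb(Q_1 \cap Q_2) \ge \Pb(Q_1) \Pb(Q_2)$. The only obstacle worth flagging is the bookkeeping of these two limits together with the coupling used to transfer monotonicity from $\I_{Q_i}$ to $f_i^\epsilon$; once that coupling is written out carefully (fix all Poisson points outside $C_k$, then compare the conditional laws inside $C_k$ given $X_k = 0$ versus $X_k = 1$), everything falls into place.
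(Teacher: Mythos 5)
Your discretize-and-apply-Harris argument is the standard proof of this FKG inequality and matches the approach of the reference the paper defers to (Bezuidenhout--Grimmett, Section~2.2) rather than reproving. One small remark: the zero-probability-boundary/Skorohod hypothesis is not actually needed here --- L\'evy's upward theorem gives $f_i^\epsilon \to \I_{Q_i}$ in $L^2$ as soon as $Q_i \in \sigma(U)$ and the dyadic cell $\sigma$-algebras increase to $\sigma(U)$ up to null sets, which they do since Poisson points are a.s.\ simple; that boundary assumption is relevant elsewhere (e.g.\ for the BK inequality and weak-convergence arguments), not for FKG.
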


\begin{definition}
Let $D$ denote a Borel subset of $\R^2$.
For a given configuration~$\omega$, we say that~\emph{$Q$ occurs on $D$} if $\omega' \in Q$ for any configuration~$\omega'$ such that $\omega' \cap D = \omega \cap D$. 
\end{definition}

\begin{definition}
We way that \emph{$Q_1$ and $Q_2$ occur disjointly} if there exist disjoint sets $D_1$ and $D_2$ such that $Q_1$ occurs on $D_1$ and $Q_2$ occurs on $D_2$.
This event is denoted by $Q_1 \Square\, Q_2$.
\end{definition}

\begin{theorem*}
[BK Inequality]
If $Q_1$ and $Q_2$ are increasing, and depend only on the configuration~$\omega$ within a bounded domain, then
\(
\Pb(Q_1 \Square\,Q_2) \le \Pb(Q_1)\Pb(Q_2).
\)
\end{theorem*}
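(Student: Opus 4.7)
The plan is to reduce the continuum statement to the classical van~den~Berg--Kesten inequality for independent Bernoulli variables, and then pass to the limit. Since $Q_1$ and $Q_2$ depend only on a bounded domain $B\subset\R^2$, it suffices to understand the Poisson process restricted to $B$.

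First I would discretize by partitioning $B\cap((\Z\cup\LL)\times\R)$ into small cells of side length $1/n$, and attach to each cell~$c$ the indicator $X_c=\I_{U\cap c\ne\emptyset}$; by independence of the Poisson process on disjoint cells these are mutually independent Bernoullis. To match the paper's partial order, I would then flip the ``death'' coordinates by setting $Y_c=1-X_c$ on cells intersecting $\Z\times\R$ and $Y_c=X_c$ on cells intersecting $\LL\times\R$. With this relabelling the paper's notion of increasing translates into coordinatewise monotonicity in the standard sense on the finite product space of the $Y_c$. I would then approximate $Q_1$ and $Q_2$ by cylinder events $Q_i^n$ determined by this finite family, so that the classical BK inequality for products of Bernoullis gives $\Pb(Q_1^n\,\Square\,Q_2^n)\le\Pb(Q_1^n)\Pb(Q_2^n)$.

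To conclude I would let $n\to\infty$. Convergence $\Pb(Q_i^n)\to\Pb(Q_i)$ uses the zero-probability-boundary assumption on $Q_1,Q_2$, and if $D_1,D_2$ are disjoint Borel witnesses for $Q_1\,\Square\,Q_2$ then the unions of cells meeting $D_1$ and $D_2$ are still disjoint for $n$ large, which yields $\Pb(Q_1\,\Square\,Q_2)\le\liminf_n\Pb(Q_1^n\,\Square\,Q_2^n)$. The hard part will be precisely this limiting step: making sure that disjoint occurrence at the discrete level genuinely captures disjoint occurrence in the continuum, without spurious pairings from cells straddling the common boundary of $D_1$ and $D_2$, and without losing witnesses due to cells that split across $D_i$. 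This measure-theoretic technicality is exactly what motivates the topological conventions recalled from~\cite{bezuidenhout-grimmett-91}, and I would invoke that framework rather than redo the arguments here.
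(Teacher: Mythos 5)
The paper does not prove the BK inequality; it defers entirely to \cite[Sect.~2.2]{bezuidenhout-grimmett-91}, and your discretize-then-pass-to-the-limit sketch is the standard route taken there, so the two approaches are essentially the same. One caution: the intermediate claim that ``the unions of cells meeting $D_1$ and $D_2$ are still disjoint for $n$ large'' is false as stated---two disjoint Borel sets can be interleaved at every scale, so some cell will always meet both---and must be repaired by using that the Poisson configuration a.s.\ has finitely many points in the bounded domain and none on cell boundaries, so the witnesses can be replaced by finite unions of dyadic cells before the discrete BK inequality is applied; you gesture at this (``spurious pairings from cells straddling the common boundary'') and, like the paper, ultimately defer the rigorous treatment to the cited reference.
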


For the proofs, see for instance~\cite[Sect.~2.2]{bezuidenhout-grimmett-91}.

\subsection{Good points and break points}
\label{sec:breakpoints}

The definitions below are parametrized by $t>0$ and $\beta>0$, but we omit it in the notation.
We write $\beta t$ as a short for $\lceil \beta t \rceil$.
For simplicity we assume through this whole section that the initial configuration $A \in \sigmainftybar$ is fixed.

\begin{definition}
[Good point]
We say that $(z,s)$ is \emph{good}, an event denoted by $G^s_z$, if every $\lambda$-path starting at $(z,s)$ makes less than $\beta t$ jumps during $[s,s+t]$.
We also denote $\hat G^s_z:=G^s_z \cap G^s_{z+2\beta t}.$
The time~$s$ is omitted when $s=0$.
We will say that $(z,s)$ is \emph{$(\beta,t)$-good} when we need to make $\beta$ and $t$ explicit.
\end{definition}

The above definition is helpful in this continuous-time context, as a way to recover independence of connectivity at distant regions.
As an example, observe that even though the events $\{\mathbf{0} \rightsquigarrow L_t\}$ and $\{(2\beta t,0) \rightsquigarrow L_t\}$ are not independent, they are conditionally independent given $\hat G_0$.
Hereafter we write $\mathbf{0}$ for the space-time point $(0,0)$.
Moreover, each point is typically good, so that conditioning on a large set of points to be good has negligible impact on the underlying distribution.

\begin{definition}
[Break point]
We say that the space-time point $(y,s)$ is a \emph{break point} if, for every $w \in (y,y+2\beta t] \cap \Z$, $L_0 \not \rightsquigarrow (w,s)$.
\end{definition}

Let
\( X := \max\{x\in A:(x,0) \rightsquigarrow L_t\} \)
denote the first site whose infection survives up to time~$t$, and let $\Gamma:[0,t]\to\Z$ given by
\[
\Gamma(s) := \max\{y:(X,0)\rightsquigarrow (y,s)\rightsquigarrow L_t\}
\]
denote the ``rightmost path'' from $(X,0)$ to $L_t$.
Take
\[ R := \inf\{s \in [0,t]:(s,\Gamma(s)) \text{ is a break point}\} \]
as the time of the first break point in~$\Gamma$, and let $Y:=\Gamma(R)$, see Figure~\ref{figure:definitions}.
Finally take
\[ \mathcal{A} := \{x \le 0 : A \times \{0\} \rightsquigarrow (Y+x,R)\} \]
as the set of sites infected at time~$R$, lying to the left of~$\Gamma$, seen from~$\Gamma$.
\begin{figure}[ht!]
\centering\includegraphics[height=60mm]{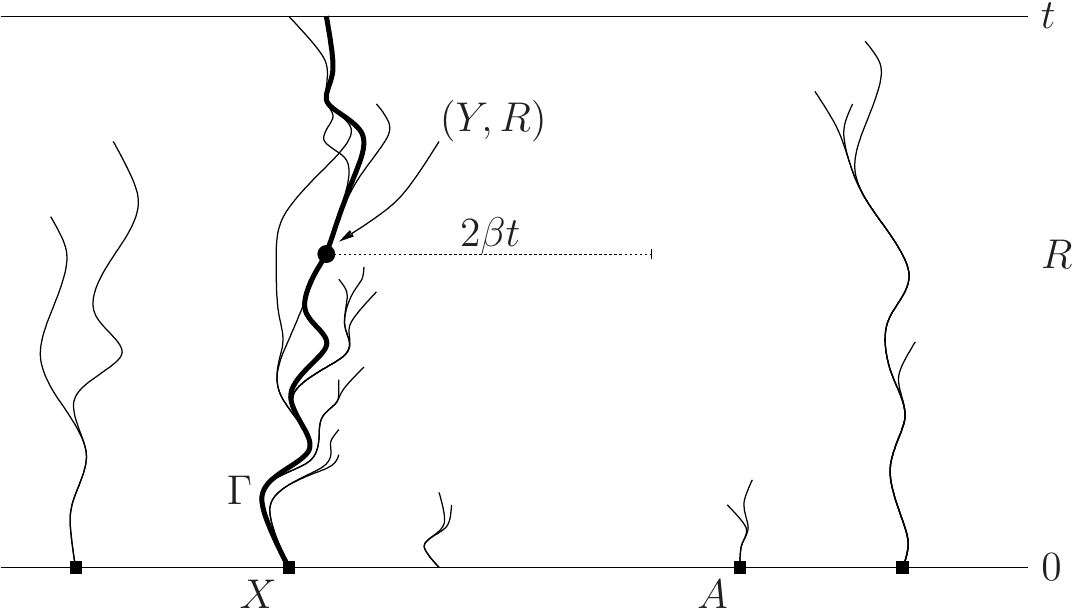}
\caption{%
The set~$A$ is represented by squared points, and $X\in A$ is its rightmost point connected to~$L_t$.
Among all paths from $A\times\{0\}$ to $L_t$, the curve~$\Gamma$ (in bold) is the rightmost one.
The space-time point $(Y,R) \in \Gamma$ is a break point, i.e., no points in $L_0$ are connected to the horizontal segment with width $2\beta t$ lying to the right of $(Y,R)$.
It is also the first point of~$\Gamma$ with such property.
}
\label{figure:definitions}
\end{figure}

Here is a sketch on how the above objects will be used to prove Theorem~\ref{thm:2}.
We want to compare $\L(\zeta_t^A)$ and $\nu$.
The main property of break points is that
\[ \L\left(\, \zeta_t^A \,\big|\, R=s, Y=y, \hat G_s^y \,\right) = \L\left( \, \zeta^{\A}_{t-s} \,\big|\, \mathbf{0} \rightsquigarrow L_{t-s}, G_0 \, \right), \]
which will be explained with Figure~\ref{figindependence}.
Another important property is that, with high probability, $R < \frac{t}{2}$, so one can think of $t-s$ as being a large number.
Using this and BK inequality we can show that
\[ \L\left( \zeta^{\A}_{t-s} \,\big|\, \mathbf{0} \rightsquigarrow L_{t-s}, G_0 \right) \approx \L \left( \zeta^{0}_{t-s} \,\big|\, \mathbf{0} \rightsquigarrow L_{t-s} \right), \]
and the latter converges to~$\nu$ by Proposition~\ref{prop:yaglom}.

In the sequel we state these two properties.

\pagebreak[0]
\begin{proposition}
\label{prop-hasbreakpoints}
If $\beta$ is large enough, then
$\Pb\left( R \leqslant \frac{t}{2} \right) \to1 \text{ as }t\to\infty.$
\end{proposition}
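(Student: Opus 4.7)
The plan is to prove the stronger statement that at a suitable deterministic time $s_0$ of order $\log t$, one already has $\Pb\bigl((\Gamma(s_0), s_0) \text{ is a break point}\bigr) \to 1$ as $t \to \infty$. Since $s_0 < t/2$ eventually, this gives the claim. The two probabilistic inputs are, first, the subcritical exponential decay $\Pb(\eta^0_s \ne \emptyset) \le e^{-\alpha s}$ for some $\alpha > 0$ depending on $\lambda$, which by self-duality of the graphical construction yields $\Pb(L_0 \rightsquigarrow (w, s)) \le e^{-\alpha s}$ for every $(w, s)$; and second, the good-point estimate $\Pb(G^s_z) \ge 1 - e^{-c\beta t}$ for $\beta$ large, which confines every $\lambda$-path from $(z, s)$ to a horizontal window of width $O(\beta t)$ during $[s, s+t]$.

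Given these, the skeleton is as follows. The good-point event for $(X, 0)$ forces $\Gamma(s) \in (X - \beta t, X + \beta t)$ for all $s \in [0, t]$, so the random strip $(\Gamma(s_0), \Gamma(s_0) + 2\beta t]$ sits inside the window $W_X := (X - \beta t, X + 3\beta t]$ of width $4\beta t$. Writing $B_s := \{w \in \Z : L_0 \rightsquigarrow (w, s)\}$ and using spatial translation invariance of $L_0$ and of the Poisson process, the probability that $B_{s_0}$ intersects any \emph{fixed} interval of width $4\beta t$ is at most $4\beta t \cdot e^{-\alpha s_0}$ by Markov's inequality, which is $o(1)$ once $s_0 \ge \frac{2}{\alpha}\log t$. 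To transfer this deterministic-window bound to the random window $W_X$, I would invoke the BK inequality on the good-point event $\hat G^0_X$: the path from $(X, 0)$ to $L_t$ is then localized around $X$ while any path from $L_0$ into $W_X$ must enter through its boundary, and this geometric separation would force the two relevant increasing events, namely ``$(x, 0) \rightsquigarrow L_t$'' and ``$L_0 \rightsquigarrow (w, s_0)$ for some $w \in W_x$'', to occur on disjoint sets of Poisson marks; the decreasing part of $\{X = x\}$, namely that no $x' > x$ in $A$ survives, only lowers the strip-hit probability by FKG and can be discarded.

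The main obstacle will be making this decoupling rigorous and uniform in $x$. Because $A$ is typically infinite and $X$ itself may lie exponentially far from the origin (with $|X|$ of order $e^{\alpha t}$), the naive union bound of the BK estimate $\Pb\bigl((x, 0) \rightsquigarrow L_t,\ B_{s_0} \cap W_x \ne \emptyset\bigr) \le p_t \cdot 4\beta t \cdot e^{-\alpha s_0}$ over $x \in A$ is divergent. The correct route is to exploit the disjointness of the events $\{X = x\}$ together with an estimate of the form $\Pb\bigl(B_{s_0} \cap W_x \ne \emptyset \mid X = x\bigr) \le 4\beta t \cdot e^{-\alpha s_0}$ uniform in $x$, since then summing over $x$ weighted by $\Pb(X = x)$ yields the absolute bound $4\beta t \cdot e^{-\alpha s_0}$. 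Establishing this uniform conditional bound is the heart of the argument, and I expect it to require careful FKG/BK manipulations on an enlarged good-point event, isolating the Poisson marks used to produce the rightmost survivor from those that could connect $L_0$ into $W_x$.
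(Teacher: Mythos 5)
Your plan is built on the claim that $\Pb\bigl((\Gamma(s_0),s_0)\text{ is a break point}\bigr)\to 1$ for a single deterministic $s_0$ of order $\log t$, but this is false: that probability is bounded away from $1$ uniformly in $t$. The reason is that the exponential-decay bound $\Pb(L_0\rightsquigarrow(w,s_0))\le e^{-\alpha s_0}$ only controls paths from $L_0$ that are ``unconditioned'', whereas after conditioning on $\Gamma$ the path $\gamma$ itself is a source of infection connected to $L_0$. Concretely, for $u$ slightly less than $s_0$, the point $(\gamma(u),u)$ has a probability bounded below by a positive constant (independent of $t$) of emitting a horizontal arrow and infecting some site $w\in(\gamma(s_0),\gamma(s_0)+2\beta t]$ by time $s_0$; such an event makes $(\Gamma(s_0),s_0)$ fail to be a break point. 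This is precisely why the paper's Lemma~\ref{l3} only asserts $p_\beta=\sup_t\Pb(D_t\rightsquigarrow C_t)<1$ and cannot assert $p_\beta\to 0$: the part of $D_t$ at small height (which models the portion of $\gamma$ just below the candidate break point) connects to $C_t$ with fixed positive probability. Your proposed BK/FKG decoupling cannot remove this obstruction, because the offending paths start on $\gamma$ itself and are positively correlated with the conditioning, not negatively.

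Because a single attempt has success probability only bounded away from zero (not tending to one), the paper instead manufactures many approximately independent attempts: Lemma~\ref{lemmafavorable} shows that a path with at most $\beta t$ jumps contains at least $\tfrac{\sqrt t}{4}-1$ disjoint favorable intervals in $[0,t/2]$, and Lemma~\ref{l4} shows that on each favorable interval, conditionally on $\Gamma=\gamma$, a break point appears with probability at least $1-p_\beta>0$, these trials being independent across disjoint time slabs. The overall failure probability is then at most $p_\beta^{\sqrt t/4-1}\to 0$, plus the probability that $X$ is not good (so that Lemma~\ref{lemmafavorable} does not apply). The FKG step you anticipate is indeed present in Lemma~\ref{l4}: conditioning on $\{\Gamma=\gamma\}$ decomposes into $\{\gamma\text{ open}\}\cap H_\gamma^c$, and since $H_\gamma^c$ is decreasing on $D_\gamma^+$, FKG shows the conditioning can only help; but the events $J_{y,s,\gamma}$ that must fail crucially include $\{\gamma\rightsquigarrow(y,s)+C_t\text{ in }D_\gamma^+\}$, whose probability does not vanish, and this is the piece your argument omits.
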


Proposition~\ref{prop-hasbreakpoints} is proved in Section~\ref{sec:findbreakpoints}.
The first lemma below describes a regular conditional distribution\footnotemark\ of $\zeta_t^A$ given $\mathcal{A}$, $Y$ and $R$, on the event that some points are good.

\begin{lemma}
\label{lemma2}
For any $s\in[0,t]$, $y\in\Z$, and $A' \in \sigmainfty_0$,
\[
\mathscr{L}\left( \zeta^{A}_{t} \,\big|\, \mathcal{A}=A', Y=y, R=s, \hat G_{y}^{s} \right)
=
\mathscr{L}\left( \zeta^{A'}_{t-s} \,\big|\, \mathbf{0} \rightsquigarrow L_{t-s}, G_{0} \right).
\]
\end{lemma}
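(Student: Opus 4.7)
The plan is to apply the Markov property at time~$s$, use the goodness events in $\hat G_y^s$ to factor the future Poisson process into independent pieces on either side of the break point, and finally shift by $(-y,-s)$ to land on the right-hand side via translation invariance.

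First I would condition on the full past $\F_s$ in addition to the events in the statement and write $\eta_s^A=(A'+y)\cup C$, where the past-measurable set $C\subseteq(y+2\beta t,\infty)$ collects the infected sites to the right of~$y$ at time~$s$. The break point hypothesis forces $(y,y+2\beta t]\cap\Z$ to be empty at time~$s$, so a full buffer separates the two components. The restrictions of the Poisson process to the vertical strips $(-\infty,y+\beta t)\times(s,s+t]$ and $(y+\beta t,\infty)\times(s,s+t]$ are independent; I will call them the \emph{Left} and \emph{Right} pieces.

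The geometric heart is a crossing bound: any $\lambda$-path that crosses the vertical line $\{x=y\}$ during $[s,s+t]$ can be cropped at the crossing time and prepended with a vertical segment back to $(y,s)$, giving a $\lambda$-path from $(y,s)$ whose number of jumps is at least its horizontal excursion---forbidden by~$G_y^s$ once that excursion reaches~$\beta t$. The symmetric statement for the line $\{x=y+2\beta t\}$ holds under $G_{y+2\beta t}^s$. Hence every forward open path from $(A'+y)\times\{s\}$ stays in the Left strip and every forward open path from $C\times\{s\}$ stays in the Right strip; consequently $G_y^s$ and the event $(y,s)\rightsquigarrow L_t$ are Left-measurable, while $G_{y+2\beta t}^s$ is Right-measurable. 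A standard planarity argument on~$\Z$ gives $\max\eta_t^A=\Gamma(t)<y+\beta t$ under $G_y^s$, so descendants of~$C$---which live in $(y+\beta t,\infty)$---cannot reach $L_t$; therefore $\eta_t^A=\eta_{s,t}^{A'+y}$ and $\zeta_t^A=\zeta_{s,t}^{A'+y}$ under the conditioning.

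Next I would perform the shift $(x,u)\mapsto(x-y,u-s)$. Translation invariance of the Poisson process turns $\zeta_{s,t}^{A'+y}$ into $\zeta_{t-s}^{A'}$ driven by a fresh copy, and the conditioning events translate to $\mathbf{0}\rightsquigarrow L_{t-s}$, $G_0$, and $G^0_{2\beta t}$. Applying the crossing bound in the shifted picture and using $A'\subseteq(-\infty,0]$ localizes $\zeta^{A'}_{t-s}$, $G_0$, and $\mathbf{0}\rightsquigarrow L_{t-s}$ inside $(-\infty,\beta t)\times(0,t]$, while $G^0_{2\beta t}$ lives in the disjoint strip $(\beta t,3\beta t)\times(0,t]$. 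Independence of the Poisson process on disjoint regions then lets me drop $G^0_{2\beta t}$ from the conditioning, which is exactly the right-hand side.

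The main obstacle will be the careful bookkeeping of past-versus-future and Left-versus-Right measurability, especially for the event $\{R=s,Y=y\}$: the assertion $\Gamma(s)=y$ simultaneously requires the past existence of a path from~$A$ to $(y,s)$ and the future existence of a path from $(y,s)$ to~$L_t$, and $R=s$ additionally encodes the non-existence of earlier break points along the random curve~$\Gamma$. The crossing bounds delivered by $\hat G_y^s$ are exactly what route each of these sub-conditions through either a Left or a Right event, so the real work is checking that every remaining piece of conditioning on the left-hand side is either implied by what we keep or factors through a Right-region event independent of $\zeta_t^A$.
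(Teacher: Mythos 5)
Your route is essentially the paper's own: use $\hat G_y^s$ to localize, split the graphical construction into the past, the region left of the line $x=y+\beta t$ after time $s$, and the region to its right, identify $\zeta^A_t$ with $\zeta^{A'+y}_{s,t}$ on the conditioning event, discard the non-Left conditioning by independence, and finish by translation invariance. The cropping/crossing bound, the observation that the descendants of the sites right of $y+2\beta t$ cannot reach $L_t$, and the final shift are all correct and are exactly the ingredients of the paper's argument (your past-plus-Right and Left pieces are the paper's regions $E^-_{y,s}$ and $E^+_{y,s}$).

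However, the step you defer to your last paragraph is not routine bookkeeping: it is the substance of the paper's proof. One must exhibit an explicit event (the paper's $H$) such that $\{\mathcal{A}=A',\,Y=y,\,R=s\}\cap\hat G_y^s$ coincides with $\{(y,s)\rightsquigarrow L_t\}\cap G_y^s$ intersected with an event determined by the past together with the Right strip. Concretely this requires: (i) rewriting the future-dependent objects $X$, $\Gamma$ restricted to $[0,s]$, and the condition ``no break point before time $s$'' in terms of the pre-$s$ surrogates $X_{y,s}=\max\{x\in A:(x,0)\rightsquigarrow(y,s)\}$ and the rightmost path from $(X_{y,s},0)$ to $(y,s)$, which is done by planarity/crossing arguments of the same flavor as your cropping lemma; and (ii) handling the condition that $\eta^A_s\cap(y+2\beta t,\infty)$ does not reach $L_t$, which is \emph{not} by itself measurable with respect to past-plus-Right --- it becomes so only in conjunction with $G^s_{y+2\beta t}$. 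In your sketch the second goodness event is used only to localize the descendants of $C$ when proving $\zeta^A_t=\zeta^{A'+y}_{s,t}$, but it is equally essential in the factorization itself, and since it is part of the conditioning it must be grouped with the dropped (past/Right) part or discarded by a separate independence step, as you do after the shift. Finally, your phrase ``factors through a Right-region event'' should read ``past or Right-region event'': most of the residual conditioning ($\mathcal{A}=A'$, the paths from $A$ to $(y,s)$, the absence of earlier break points) is carried by the configuration before time $s$, i.e., by $\F_s$, which you rightly included at the start. Once this decomposition is written out, your independence-and-shift argument completes the proof exactly as in the paper.
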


\footnotetext
{
The random elements considered in this paper are a graphical construction~$U$ and sometimes a random initial condition~$\eta_0$, both given by locally finite subsets of an Euclidean space.
Therefore we can assume that $(\Omega,\F)$ is a Polish space, and as a consequence regular conditional probabilities exist.
In particular, conditioning on events such as $\{R=s\}$, $\{\Gamma=\gamma \}$, etc. is well defined.
}

In the sequel we state two lemmas that will fill the remaining technical gaps.
We then prove Theorem~\ref{thm:2} and finally the lemmas.

\begin{lemma}
\label{lemma3}
If $\beta$ is large enough, then, as $t\to\infty$,
\[
\Pb \left( G_X, \hat G_Y^R \right) \to 1
\quad
\text{and}
\quad
\sup_{s\in[0,t]} \sup_{A\in\sigmabar}
\left\|
\mathscr{L}\left( \zeta^{A}_{s} \,\big|\, \mathbf{0} \rightsquigarrow L_{s}, G_{0} \right)
-
\mathscr{L}\left( \zeta^{A}_{s} \,\big|\, \mathbf{0} \rightsquigarrow L_{s} \right)
\right\|_{TV}
\to 0
.
\]
\end{lemma}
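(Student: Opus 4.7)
The plan is to base everything on a uniform estimate of the form $\Pb((G^s_z)^c)\leq e^{-c(\beta)t}$, with $c(\beta)\to\infty$ as $\beta\to\infty$, and then to combine it with geometric localization for the first claim and an explicit survival lower bound for the second.

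For the single-point estimate, by translation invariance it suffices to take $(z,s)=\mathbf{0}$. Letting $\Lambda_k$ denote the number of $\lambda$-paths from $\mathbf{0}$ that make exactly $k$ jumps during $[0,t]$, a first-moment computation gives $\E[\Lambda_k]=(2\lambda t)^k/k!$: there are $2^k$ nearest-neighbor walks of length $k$, and the $\lambda$-marks on each edge form independent Poisson processes of rate $\lambda$, so the expected number of ordered $k$-tuples of marks respecting a given walk equals $\lambda^k t^k/k!$. Markov's inequality combined with Stirling's formula then yields
\[
\Pb((G^0_0)^c)\,\leq\,\sum_{k\geq \beta t}\frac{(2\lambda t)^k}{k!}\,\leq\,e^{-c(\beta)t},\qquad c(\beta)=\beta\log\frac{\beta}{2e\lambda}+o(1)\to\infty.
\]

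For the first claim I would confine $X$, $Y$, and $Y+2\beta t$ to a space-time box of area $O(t^2)$. A standard linear-speed large deviation for the subcritical contact process furnishes $M=M(\lambda)$ such that, with probability at least $1-e^{-\kappa t}$, no open path from $L_0$ travels horizontal distance $Mt$ within time $t$; since $\Gamma$ is built from open paths it has slope at most $M$, so $X\in[-Mt,0]$ and $Y\in[X-Mt,X+Mt]\subseteq[-2Mt,Mt]$. Discretizing $[0,t]$ into unit time intervals and union-bounding the single-point estimate over the corresponding $O(t^2)$ lattice cells then yields
\[
\Pb(G_X,\hat G^R_Y)\,\geq\,1-O(t^2)\,e^{-c(\beta)t}-e^{-\kappa t}\,\longrightarrow\,1
\]
as $t\to\infty$, after a routine translation by at most one unit (passing between $(Y,R)$ and its grid corner, at the cost of replacing $\beta$ by a slightly smaller constant) has been absorbed.

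For the second claim I would invoke the elementary inequality $\|\Pb(\cdot\,|\,E)-\Pb(\cdot\,|\,F)\|_{TV}\leq \Pb(F\setminus E)/\Pb(E)$ for nested events $E\subseteq F$, with $E=\{\mathbf{0}\rightsquigarrow L_s\}\cap G_0$ and $F=\{\mathbf{0}\rightsquigarrow L_s\}$, noting that the right-hand side does not depend on $A$. The numerator is at most $\Pb(G_0^c)\leq e^{-c(\beta)t}$, while the denominator is bounded below by $\Pb(\mathbf{0}\rightsquigarrow L_s)-\Pb(G_0^c)\geq e^{-s}/2\geq e^{-t}/2$ for $t$ large, using that the origin surviving its recovery clock during $[0,s]$ already guarantees $\mathbf{0}\rightsquigarrow(0,s)$. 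The resulting ratio, bounded by $2e^{-(c(\beta)-1)t}$, tends to zero uniformly in $s\in[0,t]$ and $A\in\sigmabar$ as soon as $\beta$ is chosen large enough that $c(\beta)>1$. The delicate step is thus the union bound in the first claim, where the polynomial factor $t^2$ must be absorbed into $c(\beta)$; since $c(\beta)$ grows without bound in $\beta$, any threshold large enough to dominate both that factor and the requirement $c(\beta)>1$ works.
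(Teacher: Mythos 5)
Your single-point estimate is correct (and even a bit sharper than the paper's Lemma~\ref{lemma-good}, since you save a factor of $2^k$), and your proof of the second claim is essentially the paper's: both reduce to showing $\inf_{s\leq t}\Pb(G_0\mid\mathbf{0}\rightsquigarrow L_s)\to 1$ by comparing $\Pb(G_0^c)\leq e^{-c(\beta)t}$ with the elementary survival lower bound $\Pb(\mathbf{0}\rightsquigarrow L_s)\geq e^{-s}\geq e^{-t}$, after choosing $\beta$ so that $c(\beta)>1$.

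However, your proof of the first claim has a genuine gap, and it is precisely the place where the real work of the lemma lives. You assert that a linear-speed bound confines $X$ to $[-Mt,0]$. This is false: $X$ is a point of the initial set $A$, which is an arbitrary infinite subset of $-\N$ and can be extremely sparse (e.g.\ $A=\{0,-e^{10t},-e^{20t},\dots\}$), so $|X|$ has no bound of the form $Mt$. Relatedly, the statement ``with probability at least $1-e^{-\kappa t}$, no open path from $L_0$ travels horizontal distance $Mt$ within time $t$'' cannot hold, since $L_0$ is infinite and with probability one some site will have an atypically fast-spreading cluster; linear-speed large deviations apply to a single (or boundedly many) starting sites, not to all of $\Z$. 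Consequently the space-time box of area $O(t^2)$ over which you take a union bound simply does not contain $X$, $Y$, $Y+2\beta t$ in general, and the argument collapses. The paper avoids this by localizing the \emph{rank} of $X$ in $A$ rather than its position: since a purely vertical open path from $(y,0)$ to $L_t$ has probability $e^{-t}$ and these events are independent over $y$, the index $k$ with $X=x(k)$ is stochastically dominated by a geometric of parameter $e^{-t}$, so $\Pb(X<x(e^{2t}))\to 0$. One then needs to make the good events cover $Y$ and $Y+2\beta t$, whose positions are only controlled once $(X,0)$ is already known to be $(2t,\beta/2)$-good; altogether the union bound runs over at most $e^{2t}\cdot O(\beta t)$ sites rather than $O(t^2)$, and the single-point estimate must be taken with exponent $\rho>3$ (hence $\beta$ large) to beat the $e^{2t}$ factor. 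Your estimate $e^{-c(\beta)t}$ with $c(\beta)\to\infty$ would suffice for this corrected union bound, but the localization step as written is irreparably wrong and must be replaced by the rank-based argument.
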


\begin{lemma}
\label{lemma4}
As $t\to\infty$,
\[
\sup_{A\in\sigmabar}
\left\|
\mathscr{L}\left( \zeta^{A}_{t} \cap{[-2t,0]} \,\big|\, \mathbf{0} \rightsquigarrow L_{t} \right)
-
\mathscr{L}\left( \zeta^{0}_{t} \cap{[-2t,0]} \,\big|\, \mathbf{0} \rightsquigarrow L_{t} \right)
\right\|_{TV}
\to 0
.
\]
\end{lemma}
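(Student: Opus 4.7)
Assume $A\ne\emptyset$, so $A=\{0\}\cup B$ for some $B\subseteq -\N$, and realize $\zeta^A_t$ and $\zeta^0_t$ on the same graphical construction. Since $\eta^A_t=\eta^0_t\cup\eta^B_t$, the restrictions $\zeta^A_t\cap[-2t,0]$ and $\zeta^0_t\cap[-2t,0]$ coincide on the event $E:=\{\eta^B_t\cap[r-2t,\infty)=\emptyset\}$, where $r:=\max\eta^0_t$. The standard coupling inequality then bounds the total-variation distance above by $\Pb(E^c\mid\mathbf{0}\rightsquigarrow L_t)$, and the task is to show this tends to $0$ uniformly in $B$. A Poisson-domination bound on the range of the descendants of $(0,0)$ gives $\Pb(|r|>Mt)\le e^{-cMt}$ for $M$ large, which combined with $\Pb(\mathbf{0}\rightsquigarrow L_t)\ge ce^{-\alpha t}$ from Proposition~\ref{prop:yaglom} (with $\alpha=1/\E[\tau^\nu]$) lets us restrict to $\{|r|\le Mt\}$. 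On this event, $E^c\subseteq\bigcup_{b\in B}\bigl\{(b,0)\rightsquigarrow[-(M+2)t,\infty)\times\{t\}\bigr\}$.

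The core step is to decouple each event $\{(b,0)\rightsquigarrow[-(M+2)t,\infty)\times\{t\}\}$ from $\{\mathbf{0}\rightsquigarrow L_t\}$ via the BK inequality, yielding
\[
\Pb\bigl(E^c,\,|r|\le Mt\bigm|\mathbf{0}\rightsquigarrow L_t\bigr)\le\sum_{b\in B}\Pb\bigl((b,0)\rightsquigarrow[-(M+2)t,\infty)\times\{t\}\bigr).
\]
For $|b|\le(M+3)t$ each summand is bounded by $\Pb(\tau^0>t)\le Ce^{-\alpha t}$, and there are only $O(t)$ such $b$, contributing $O(te^{-\alpha t})$. For $|b|>(M+3)t$, Poisson domination of the rightmost descendant yields $\Pb(\max\eta^0_t\ge|b|-(M+2)t)\le Ce^{-c(|b|-(M+2)t)}$ once $|b|-(M+2)t$ exceeds a multiple of $t$ (the intermediate range being absorbed into the previous bound), contributing $O(e^{-ct})$ after summation. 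Both terms tend to $0$ as $t\to\infty$, uniformly in $B$.

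The main technical obstacle is the BK decoupling: strictly speaking, BK controls $\Pb(Q_1\Square Q_2)$ rather than $\Pb(Q_1\cap Q_2)$, so one needs the inclusion $Q_1\cap Q_2\subseteq Q_1\Square Q_2$ for the increasing path events $Q_1=\{(b,0)\rightsquigarrow[-(M+2)t,\infty)\times\{t\}\}$ and $Q_2=\{\mathbf{0}\rightsquigarrow L_t\}$. This amounts to exhibiting vertex-disjoint open witnesses whenever both events occur, which is plausible since the paths emanate from distinct sites $b\ne 0$, but must be verified by a rerouting argument tailored to the continuous-time graphical construction of Subsection~\ref{sec:graphical.construction}.
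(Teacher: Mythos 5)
Your setup is sound: coupling $\zeta^A_t$ and $\zeta^0_t$ on the same graphical construction, exhibiting a sufficient event $E$ for the restrictions to $[-2t,0]$ to agree, reducing the total-variation distance to $\Pb(E^c\mid\mathbf{0}\rightsquigarrow L_t)$, and controlling $r=\max\eta^0_t$ by a goodness/Poisson-jump bound against the lower bound $\Pb(\mathbf{0}\rightsquigarrow L_t)\gtrsim e^{-\alpha t}$. This matches the paper's strategy in spirit.

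The genuine gap is exactly the step you flag as ``plausible'': the inclusion $Q_1\cap Q_2\subseteq Q_1\Square\,Q_2$ for $Q_1=\{(b,0)\rightsquigarrow[-(M+2)t,\infty)\times\{t\}\}$ and $Q_2=\{\mathbf{0}\rightsquigarrow L_t\}$ is \emph{false}, not just unverified. An open path from $(b,0)$ and one from $(0,0)$ can merge at a space-time point $(x,s)$ and thereafter follow an identical trajectory to a single site of $L_t$ that is their only descendant; no rerouting can produce disjoint witnesses. Indeed, $Q_1$ and $Q_2$ are both increasing, so FKG gives positive correlation, which is the \emph{wrong} direction for what you need -- there is no way to get $\Pb(Q_1\cap Q_2)\le\Pb(Q_1)\Pb(Q_2)$ from BK without first establishing that the intersection forces disjoint occurrence.

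The missing idea, which is the crux of the paper's proof, is to work with a site that is reached by $A$ \emph{but not} by $\{0\}$. If you replace your $E$ by the slightly larger (hence better) event $E'=\{\eta^A_t\cap[r-2t,\infty)=\eta^0_t\cap[r-2t,\infty)\}$, then $E'$ is still sufficient for $\zeta^A_t\cap[-2t,0]=\zeta^0_t\cap[-2t,0]$, and
\[
E'^c=\bigl\{\exists\,z\ge r-2t:\ z\in\eta^B_t,\ z\notin\eta^0_t\bigr\}.
\]
On $E'^c\cap\{\mathbf{0}\rightsquigarrow L_t\}$ there is such a $z$ with $A\times\{0\}\rightsquigarrow(z,t)$, $\mathbf{0}\not\rightsquigarrow(z,t)$, and $\mathbf{0}\rightsquigarrow L_t$; any open path from $A\times\{0\}$ to $(z,t)$ is then automatically disjoint from any open path from $\mathbf{0}$ to $L_t$ (an intersection would yield $\mathbf{0}\rightsquigarrow(z,t)$, a contradiction). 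Hence this event \emph{is} contained in $\{A\times\{0\}\rightsquigarrow(z,t)\}\Square\{\mathbf{0}\rightsquigarrow L_t\}$, and BK applies to the two increasing events after taking a union bound over $z$. This is precisely what the paper does (it also reduces to the maximal $A'=-\N_0$ by monotonicity, which simplifies the supremum over $A$, but that is a cosmetic difference). The rest of your estimate -- counting $O(t)$ relevant sites and using exponential decay of $\Pb(\tau^0>t)$ together with a large-deviation bound for the lateral spread -- then goes through.
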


Theorem~\ref{thm:2} can now be proved using the preceding results.

\begin{proof}
[Proof of Theorem~\ref{thm:2}]
For a signed measure $\mu$ on $\{0,1\}^{\Z}$, we use $\|\mu\|=\|\mu\|_{TV}$ to denote the total variation norm.
Denote $H_y^s:=\G_y^s \cap \{Y=y,R=s\}$.
Given $A\in \sigmainfty_0$,
\begin{align*}
 \limsup_{t\to\infty} & \left\| \L\left(\zeta_t^A \cap{[-t,0]}\right) - {\nu} \right\| \le
\\
 \le & \limsup_{t\to\infty}
{\Pb\left(\hat G_Y^R,R\le \tfrac{t}{2}\right)}
\left\|\L\left(\zeta_t^A \cap{[-t,0]}\,\big|\,\hat G_Y^R,R\le \tfrac{t}{2}\right) - {\nu} \right\|
+ \left[ 1 -
{\Pb\left(\hat G_Y^R,R\le \tfrac{t}{2}\right)}
\right]
\\
 = & \displaystyle \limsup_{t\to\infty} \Big\| \int_{\Z\times[0,\frac t 2]} \left[ \L\left(\zeta_t^A \cap{[-t,0]} \,\big|\, H_y^s\right) - {\nu} \right] \d \Pb\left(Y=y, R=s \,\big|\, \hat G_Y^R,R\le \tfrac{t}{2}\right) \Big\|\\
 \le & \limsup_{t\to\infty} \sup_{s\le \frac t 2} \sup_{y\in\Z} \left\|\L\left(\zeta_t^A \cap{[-t,0]}\,\big|\,H_y^s\right) - {\nu} \right\| \\
 = & \limsup_{t\to\infty} \sup_{s\le \frac t 2} \sup_{y\in\Z} \left\| \int_{\sigmainftybar} \left[ \L\left(\zeta_t^A \cap{[-t,0]}\,\big|\,H_y^s, \A=A'\right) - {\nu} \right] \d \Pb(\A=A'|H_y^s) \right\|\\
 = & \limsup_{t\to\infty} \sup_{s\le \frac t 2} \left\| \int_{\sigmainftybar} \left[ \mathscr{L}\left( \zeta^{A'}_{t-s} \cap{[-t,0]} \,\big|\, \mathbf{0} \rightsquigarrow L_{t-s}, G_{0}\right) - {\nu} \right] \d \Pb(\A=A'|H_y^s) \right\|\\
 \le & \limsup_{t\to\infty} \sup_{s\le \frac t 2} \sup_{A'\in\sigmabar} \left\| \mathscr{L}\left( \zeta^{A'}_{t-s} \cap{[-t,0]}\,\big|\, \mathbf{0} \rightsquigarrow L_{t-s} \right)
 - {\nu} \right\|\\
 \le & \limsup_{t\to\infty} \,\, \sup_{\frac t 2 \le r \le t} \left\| \mathscr{L}\left( \zeta^{0}_{r} \cap{[-t,0]} \,\big|\, \mathbf{0} \rightsquigarrow L_{r} \right)
 - {\nu} \right\|.
\end{align*}
On the first equality we used Proposition~\ref{prop-hasbreakpoints} and Lemma~\ref{lemma3}.
On the third equality we used Lemma~\ref{lemma2}.
The last two inequalities are due to Lemma~\ref{lemma3} and Lemma~\ref{lemma4}, respectively.
The last $\limsup$ vanishes by Proposition~\ref{prop:yaglom}.
\end{proof}

We proceed to prove the previous lemmas.

\begin{proof}
[Proof of Lemma~\ref{lemma2}]
Consider the regions
\[
E^+_{y,s}= (-\infty,y+\beta t] \times(s,2t], \qquad E^-_{y,s}= \R \times [0,2t] \setminus
E^+_{y,s},
\]
and the random variables $X_{y,s}=\max\{x\in A:(x,0) \rightsquigarrow (y,s)\}$, the first site whose infection reaches $(y,s)$ and $\Gamma_{y,s}:[0,s]\to\Z$ given by
\[
\Gamma_{y,s}(u)=\max\{z:(X_{y,s},0)\rightsquigarrow (z,u)\rightsquigarrow (y,s)\},
\]
the rightmost path from $(X_{y,s},0)$ to $(y,s)$.
Before continuing with the proof, the reader may see Figure~\ref{figindependence} to have a glance of the argument.

\begin{figure}[h!]
\centering\includegraphics[width=135mm]{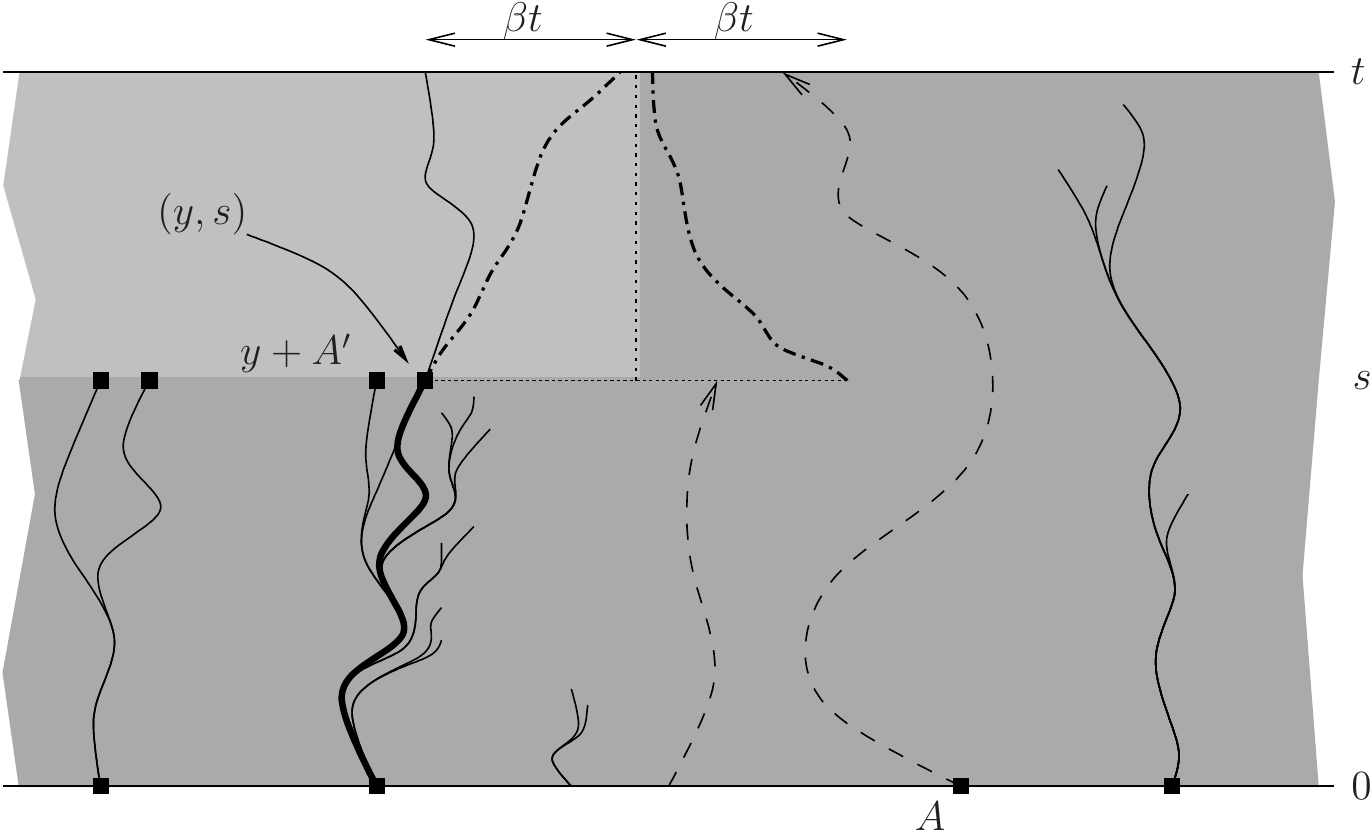}
\caption{The event $\mathcal{A}=A', Y=y, R=s, \hat G_{y}^{s}$.
This event is split in two parts.
The first part depends on the region $E^+_{y,s}$, in light gray, and consists of the occurrence of both $G^s_y$ and $\left\{(y,s) \rightsquigarrow L_t \right\}$.
A consequence of the former is that the rightmost $\lambda$-path starting at $(y,s)$, depicted by a dash-dotted line, does not reach distance $\beta t$ by time $t$, and the latter is represented by a solid arrowed line from $(y,s)$ to $L_t$.
The second part depends on the region $E^-_{y,s}$, in dark gray, and consists of the occurrence of both $H \cap G_{y+2\beta t}^{s}$, as well as an open path from $A \times \{0\}$ to each point in $y+A'$.
The latter is depicted by solid arrowed lines from $A$ at time $0$ to $y+A'$ at time $s$.
The event $H$ breaks down to the following conditions being satisfied.
First, there are open paths from $A$ to $(y,s)$, of which the rightmost one $\Gamma_{y,s}(\cdot)$ is depicted in a thick arrowed line.
Second, there is no open path from $L_0$ to the segment of size $2\beta t$ to the right of $(y,s)$, and moreover $s$ is the first time with this property, i.e., there are open paths from $L_0$ to $\Gamma_{y,s}(u)+z(u)$ for some $0\le z(u)\le 2\beta t$ for all $u<s$.
Third, there are no connections from $A\times \{0\}$ to $L_t$ to the right of $\Gamma_{y,s}$.
Even though the third condition might not depend only on the region $E^-_{y,s}$, it is the case when $G_{y+2\beta t}^{s}$ occurs, since it implies that the leftmost path starting from $(y+2\beta t,s)$, also depicted by a dash-dotted line, does not reach distance $\beta t$ by time $t$.
Finally, $E^+_{y,s}$ and $E^-_{y,s}$ are disjoint, and under the occurrence of $G^s_y$ the configuration $\zeta^{A}_t=\zeta^{A'}_{s,t}$ depends only on $E^+_{y,s}$.
Therefore only the second part ($G^s_y \cap \left\{(y,s) \rightsquigarrow L_t \right\}$) influences its distribution.
}
\label{figindependence}
\end{figure}

For a configuration $\eta \subseteq \Z$ we use the following convention: $\eta \cdot 1 = \eta$ and $\eta \cdot 0=\emptyset$.
We have
\begin{align*}
\mathscr{L}\left( \zeta^{A}_{t} \,\big|\, \mathcal{A}=A', Y=y, R=s, \hat
G_{y}^{s} \right) = \mathscr{L}\left( \zeta^{A' + y}_{s,t} \cdot \I(G_y^s) \,\big|\,
\mathcal{A}=A', (y,s) \rightsquigarrow L_t, H, \hat G_{y}^{s}\right),
\end{align*}
where
\begin{align*}
H = & \{ (A\times\{0\}) \rightsquigarrow (y,s)\}\cap \{L_0 \not \rightsquigarrow (y,y+2\beta t] \times \{s\}\}\\
 & \cap \displaystyle \bigcap_{u \in [0,s)} \{ [X_{y,s},+\infty)\times\{0\} \rightsquigarrow(\Gamma_{y,s}(u), \Gamma_{y,s}(u) + 2\beta t]\times\{u\} \}\\
 & \cap \{\eta^A_s \cap (y+2\beta t,+\infty)\times\{s\} \not \rightsquigarrow L_t\}.
\end{align*}

Observe that $H\cap \{\A=A'\}\cap G_{y+2\beta t}^{s}$ depends on $U\cap E_{y,s}^-$. Since $\zeta^{A'}_{s,t} \cdot \I(G_y^s)$ depends on $U\cap E_{y,s}^+$, we have
\begin{align*}
& \mathscr{L}\left( \zeta^{A}_{t} \,\big|\, \mathcal{A}=A', Y=y, R=s, \hat
G_{y}^{s} \right) = \mathscr{L}\left( \zeta^{A' + y}_{s,t} \cdot \I(G_y^s) \,\big|\,
(y,s) \rightsquigarrow L_t, G_{y}^{s} \right) = \\
& \mathscr{L}\left( \zeta^{A'+y}_{s,t} \,\big|\, (y,s) \rightsquigarrow L_t, G_{y}^{s} \right) = \mathscr{L}\left( \zeta^{A'}_{t-s} \,\big|\, \mathbf{0} \rightsquigarrow L_{t-s}, G_{0} \right),
\end{align*}
by translation invariance.
\end{proof}

\begin{proof}
[Proof of Lemma~\ref{lemma3}]
The two limits hold for similar reasons.
First notice that the probability that $(y,0) \rightsquigarrow L_t$ by a straight vertical path is $e^{-t}$ and that these events are independent over~$y$.

Let $x(k)$ denote the $k$-th point of $A$ from the right.
By independence, $\Pb[X<x(k)] \leqslant (1-e^{-t})^{k} \leqslant e^{-ke^{-t}}$ and writing $x(r)=x(\lfloor r \rfloor)$ we have $\Pb\big(X<x(e^{2t})\big) \to 0$.
Observe that if $(y,0)$ is $(2t,\frac{\beta}{2})$-good then $(y,s)$ is $(t,\beta)$-good for every $s \in [0,t]$.
So we can pick a $\rho>3$ and a $\beta$ according to Lemma~\ref{lemma-good} below to obtain $\Pb\big(\hat G_y^s \text{ for all } s\in[0,t]\big) > 1 - 2e^{-\rho t}$ for large enough~$t$. Hence,
\[\Pb\big(\hat G_y^s \text{ for all } y \in A \cap [x(e^{2t}),0] \text{ and }s\in[0,t]\big) \to 1,\]
and the first limit holds.

Finally,
$
\Pb(G_0^c) \leqslant
e^{-\rho t}
\ll
e^{-t}
\leqslant e^{-s} \leqslant
\Pb(\mathbf{0} \rightsquigarrow L_{s}),
$
and therefore
\begin{equation}
\label{eq:stillgood}
\lim_{t\to\infty} \inf_{s\le t}\Pb(G_0 \,|\, \mathbf{0} \rightsquigarrow L_{s} )= 1,
\end{equation}
proving the second limit.
\end{proof}

\begin{lemma}
\label{lemma-good}
For every $\rho<\infty$, one can choose $\beta<\infty$ such that
\[
\Pb(\mathbf{0} \text{ is good}\,) > 1-e^{-\rho t} \quad \text{for large enough $t$}.
\]
\end{lemma}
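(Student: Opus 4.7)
The plan is to bound $\Pb(\mathbf{0}\text{ is not good})$ by a union bound over the possible shapes of long $\lambda$-paths. A $\lambda$-path starting at $(\mathbf{0},0)$ and making $k$ jumps is determined by a sign sequence $\sigma=(\sigma_1,\dots,\sigma_k)\in\{+1,-1\}^k$ giving the directions of the successive jumps, together with times $0\le t_1<t_2<\cdots<t_k\le t$ such that each $t_i$ is a mark of the Poisson process $U^{x_{i-1},x_i}$, where $x_0=0$ and $x_i=\sum_{j\le i}\sigma_j$. Fix such a shape $\sigma$. By the strong Markov property of the family of Poisson processes $\{U^{x,x\pm1}\}$, the waiting time $t_i-t_{i-1}$ between the $(i-1)$-th and $i$-th jumps (with $t_0=0$) is distributed as an independent Exp$(\lambda)$, regardless of whether the directed edge $(x_{i-1},x_i)$ has already been used at a previous step. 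Consequently $t_k$ is a $\mathrm{Gamma}(k,\lambda)$ random variable and
\[
\Pb(\text{a $\lambda$-path of shape $\sigma$ is completed by time $t$})=\Pb(\mathrm{Gamma}(k,\lambda)\le t)\le \frac{(\lambda t)^k}{k!}.
\]

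Since there are $2^k$ shapes of length $k$, a union bound over shapes and lengths yields
\[
\Pb(\mathbf{0}\text{ is not good})\le \sum_{k\ge\beta t}2^k\cdot\frac{(\lambda t)^k}{k!}=\sum_{k\ge\beta t}\frac{(2\lambda t)^k}{k!}.
\]
For $\beta>2\lambda e$ consecutive terms decrease geometrically, so the sum is of the same order as its first term. By Stirling's formula,
\[
\frac{(2\lambda t)^{\beta t}}{(\beta t)!}=O\!\left(t^{-1/2}\bigl(2\lambda e/\beta\bigr)^{\beta t}\right)=O\!\left(t^{-1/2}e^{-\beta t\log(\beta/(2\lambda e))}\right),
\]
and since $\beta\log(\beta/(2\lambda e))\to\infty$ as $\beta\to\infty$, any prescribed $\rho<\infty$ can be beaten by taking $\beta$ large enough, giving the desired bound $\Pb(\mathbf{0}\text{ is not good})<e^{-\rho t}$ for all sufficiently large $t$.

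The only conceptually delicate step is the independence of the successive waiting times when the shape $\sigma$ revisits a directed edge; this is handled by the strong Markov property applied to each individual Poisson process, which implies that after any stopping time the time to the next mark on that edge is Exp$(\lambda)$ and independent of the history. Once this is in hand, the proof is a clean Chernoff/factorial estimate, and the fact that the exponent grows faster than linearly in $\beta$ (specifically like $\beta\log\beta$) is precisely what lets us exchange a large $\beta$ for an arbitrarily large exponential rate $\rho$.
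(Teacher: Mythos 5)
Your proof is correct and follows the same overall strategy as the paper's (union bound over a discrete indexing of $\lambda$-paths, then a Poisson/Gamma tail estimate and Stirling), but the parametrization of paths differs in a way worth noting. The paper indexes a $\lambda$-path by a binary decision sequence $a\in\{0,1\}^n$: at each arrow mark the walker encounters (rate $2\lambda$), it records whether the arrow was followed. This makes the waiting times between consecutive recorded entries trivially i.i.d.\ $\mathrm{Exp}(2\lambda)$ and yields the bound $\sum_{n\ge\beta t} 2^n (2\lambda t)^n/n!$, at the price of overcounting: a path with $k$ jumps is attributed to a sequence of length $n\ge k$. You instead index directly by the sign sequence $\sigma\in\{+1,-1\}^k$ of the $k$ jumps and bound via the ``greedy'' representative of each shape, giving $\sum_{k\ge\beta t} 2^k(\lambda t)^k/k!$. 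This is tighter (the rate is $2\lambda$ rather than $4\lambda$ in the final Stirling estimate), but it requires the observation you make explicit: that the greedy jump times are a chain of stopping times, so the inter-jump waiting times are i.i.d.\ $\mathrm{Exp}(\lambda)$ by the strong Markov property even when the same directed edge is revisited, and that the greedy path's jump times are coordinate-wise minimal among all $\lambda$-paths of a given shape (so the existence event coincides with the greedy event). The paper sidesteps this subtlety entirely by absorbing the non-jump decisions into the parametrization, at the cost of a factor of two in the rate; both arguments land comfortably because the exponent scales like $\beta\log\beta$. One small point of hygiene: as in the paper, $\beta t$ should be read as $\lceil\beta t\rceil$ so that the index set and the factorial make sense.
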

\begin{proof}
Given the Poisson process~$U$, the $\lambda$-paths starting at~$\mathbf{0}$ can be constructed by choosing at each jump mark whether or not to follow that arrow.
This way each finite path is associated to a finite binary sequence $a \in \{0,1\}^n$ for some $n\in\N$.

The $\lambda$-path corresponding to a finite sequence~$a$ makes $|a| := \sum_{i=1}^n a_i$ jumps.
Such path is performed in time~$T_a$, whose distribution is that of the sum of~$n$ independent exponential random variables with parameter~$2\lambda$.

Since
\[
\Pb(T_a \le t) = e^{-2\lambda t}\sum_{k=n}^\infty \frac{(2\lambda t)^k}{k!} \le \frac{(2\lambda t)^n}{n!}
\]
for every $a\in\{0,1\}^n$,
choosing $\beta > \max\{12\lambda e,\rho\}$, we have by Stirling's approximation
\begin{multline*}
\Pb(\mathbf{0} \text{ is not good})
\le
\sum_{|a| \ge \beta t} \Pb(T_a \le t)
\le
\sum_{n\ge \beta t} \sum_{a\in\{0,1\}^n} \Pb(T_a \le t)
=
\sum_{n\ge \beta t} 2^n \frac{(2\lambda t)^n}{n!}
\le
\\
\le
\sum_{n\ge \beta t} (4 \lambda t)^n\left(\frac{3}{n}\right)^n
\le \sum_{n\ge \beta t} \left(\frac{12\lambda t}{12\lambda t e}\right)^n \le \frac{1}{1-e^{-1}} e^{-\beta t} \le e^{-\rho t}
\end{multline*}
for all $t$ large enough.
\end{proof}

\begin{proof}
[Proof of Lemma~\ref{lemma4}]
By monotonicity on $A' \in \sigmabar$, it suffices to show that
\(
\Pb( H \,|\, \mathbf{0} \rightsquigarrow L_{t} )
\to 0
\text{ as }
t \to \infty
,
\)
where
$H=\left\{ \zeta^{A'}_{t}(x) \ne \zeta^{0}_{t}(x) \text{ for some } x\in[-2t,0] \right\}$
and $A' = -\N_0$.

Assume that the events $\{ \mathbf{0} \rightsquigarrow L_{t} \}$ and $G_0$ occur (the value of $\beta$ will be fixed later).
Then the rightmost point~$Y$ of $\eta_t$ satisfies $-\beta t < Y < \beta t$.
Therefore, if~$H$ also occurs then $(A' \times \{0\}) \rightsquigarrow (z,t)$ and $\mathbf{0}\not \rightsquigarrow (z,t)$ for some $z\in[-2t-\beta t,\beta t]$, which in turn implies that $({A'}\times\{0\}) \rightsquigarrow (z,t)$ and $\mathbf{0} \rightsquigarrow L_t$ by disjoint paths.
Let $E_n = [-n,n]\times [0,t]$.
Using the BK inequality,
\begin{align*}
\Pb ( H \cap G_0 \,|\, \mathbf{0} \rightsquigarrow L_{t} )
&
\le
\sum_z
\Pb\left( ({A'}\times\{0\}) \rightsquigarrow (z,t) \,\Square\, \mathbf{0} \rightsquigarrow L_t \,\big|\, \mathbf{0} \rightsquigarrow L_{t} \right)
\\ &
=
\sum_z
\lim_n
\frac
{\Pb\left( ({A'}\times\{0\}) \rightsquigarrow (z,t) \,\Square\, \mathbf{0} \rightsquigarrow L_t \text{ in } E_n \big. \right)}
{\Pb\left( \mathbf{0} \rightsquigarrow L_{t} \right)}
\\ &
\le
\sum_z
\lim_n
\Pb\left( \big. (A' \times \{0\}) \rightsquigarrow (z,t) \text{ in } E_n \right)
\frac
{\Pb\left( \mathbf{0} \rightsquigarrow L_t \text{ in } E_n \right)}
{\Pb\left( \mathbf{0} \rightsquigarrow L_t \right)}
\\ &
=
\sum_z
\Pb\left( \big. (A'\times \{0\}) \rightsquigarrow (z,t) \right)
\le
( 2 \beta t + 2 t ) e^{-\alpha t}
.
\end{align*}
Choosing~$\beta$ large enough so that~(\ref{eq:stillgood}) holds, we get the desired limit.
\end{proof}

\subsection{Existence of break points}
\label{sec:findbreakpoints}

In this section we prove Proposition~\ref{prop-hasbreakpoints}.
To this end we show that there must be several time intervals where the path~$\Gamma$ is reasonably smooth, so that a break point is produced on each such piece with positive probability.

\begin{definition}
[Favorable time intervals]
\label{d1}
Let $\gamma$ be a path in the time interval $[0,t]$ and let $\beta>0$.
We say that a time interval $[s-\sqrt{t},s)\subseteq [0,t]$ is \emph{favorable} for path $\gamma$ if for any $u\in [s-\sqrt{t} ,s)$ the number of jumps of $\gamma$ during $[u,s)$ is at most $4 \beta|s-u|$.
\end{definition}

\begin{lemma}
\label{lemmafavorable}
Let $\gamma$ be a path in the time interval $[0,t]$ with at most $\beta t$ jumps.
Then there are at least $\frac{\sqrt{t}}{4}-1$ disjoint favorable intervals for $\gamma$ contained in $[0,\frac{t}{2}]$.
\end{lemma}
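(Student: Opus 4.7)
The plan is a greedy right-to-left extraction. Write $J([a,b))$ for the number of jumps of $\gamma$ inside $[a,b)$; by hypothesis $J([0,t)) \le \beta t$. Set $s_{0} := t/2$, and at step $i$, provided $s_{i} \ge \sqrt{t}$, inspect the candidate interval $[s_{i} - \sqrt{t}, s_{i})$. If it is favorable for~$\gamma$, record it and move on with $s_{i+1} := s_{i} - \sqrt{t}$. Otherwise the failure of favorability yields a witness $u_{i} \in [s_{i} - \sqrt{t}, s_{i})$ with $J([u_{i}, s_{i})) > 4\beta(s_{i} - u_{i})$; in that case discard the sub-interval $[u_{i}, s_{i})$ and set $s_{i+1} := u_{i}$. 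Terminate at the first index $K$ with $s_{K} < \sqrt{t}$; note $0 \le s_{K} < \sqrt{t}$ because each step decreases $s_{i}$ by at most $\sqrt{t}$.

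By construction $(s_{i})$ is strictly decreasing in $[0,t/2]$, and the recorded and discarded intervals are pairwise disjoint subsets of $[0,t/2]$. Let $F$ be the number of recorded (favorable) intervals and $\ell_{1},\dots,\ell_{B} \in (0, \sqrt{t}]$ the lengths of the discarded ones. Summing the lengths of all extracted pieces gives
\[
F\sqrt{t} + \sum_{j=1}^{B} \ell_{j} \; = \; s_{0} - s_{K} \; > \; \frac{t}{2} - \sqrt{t} .
\]
On the other hand, each discarded piece contains more than $4\beta \ell_{j}$ jumps of $\gamma$, and since those pieces are disjoint and the total jump count is at most $\beta t$, we obtain
\[
4\beta \sum_{j=1}^{B} \ell_{j} \; < \; \beta t , \qquad \text{so} \qquad \sum_{j=1}^{B} \ell_{j} \; < \; \frac{t}{4} .
\]
Combining the two bounds yields $F\sqrt{t} > t/4 - \sqrt{t}$, i.e., $F > \sqrt{t}/4 - 1$, which is exactly the claim.

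I do not anticipate any real obstacle. The two things to check are that the greedy procedure stays inside $[0, t/2]$ (which holds because we stop as soon as a window of length $\sqrt{t}$ would no longer fit) and that the recorded intervals are genuinely favorable (immediate from the branching rule). The constant $4$ in the definition of a favorable interval is what provides the slack needed to turn the length budget $\sum \ell_{j} < t/4$ into a lower bound on $F$ of order $\sqrt{t}/4$; any constant strictly greater than $2$ would suffice to extract $c\sqrt{t}$ favorable intervals, so the factor $4$ is a convenient but not essential choice.
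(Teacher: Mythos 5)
Your proof is correct and matches the paper's argument essentially step for step: greedily scan from $t/2$ downward, record $\sqrt{t}$-windows that are favorable, discard the portion witnessing non-favorability otherwise, and then use the jump budget to bound the total discarded length by $t/4$. The only cosmetic difference is that the paper defines the discarded interval via the \emph{infimum} $v$ of all non-favorability witnesses (so it discards $[t_0-v,t_0)$), whereas you allow an arbitrary witness $u_i$; both give the same bookkeeping and the same bound.
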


To prove this lemma we will seek favorable intervals in a top-down fashion, and use the fact that the existence of a non-favorable interval requires too many jumps.
Notice that, on the event that~$X$ is good, any $\lambda$-path starting from $(X,0)$ makes less than~$\beta t$ jumps, and in particular we can apply this lemma to the path~$\Gamma$.

Define the sets $C_t=\{(x,0):x=1,2,\dots,2\beta t\}$ and
\[
D_t = \left\{ (x,-u) : x=\lfloor 4 \beta u \rfloor, u\in[0,\sqrt{t}] \right\} \cup \left\{ (x,-\sqrt{t}):x \geqslant 4\beta\sqrt{t} \right\},
\]
shown in Figure~\ref{figctdt}.
The following fact is a direct consequence of exponential decay for the subcritical contact process.
It will be proved in the end of this section for convenience.
\begin{figure}[ht!]
\centering
\includegraphics[width=130mm]{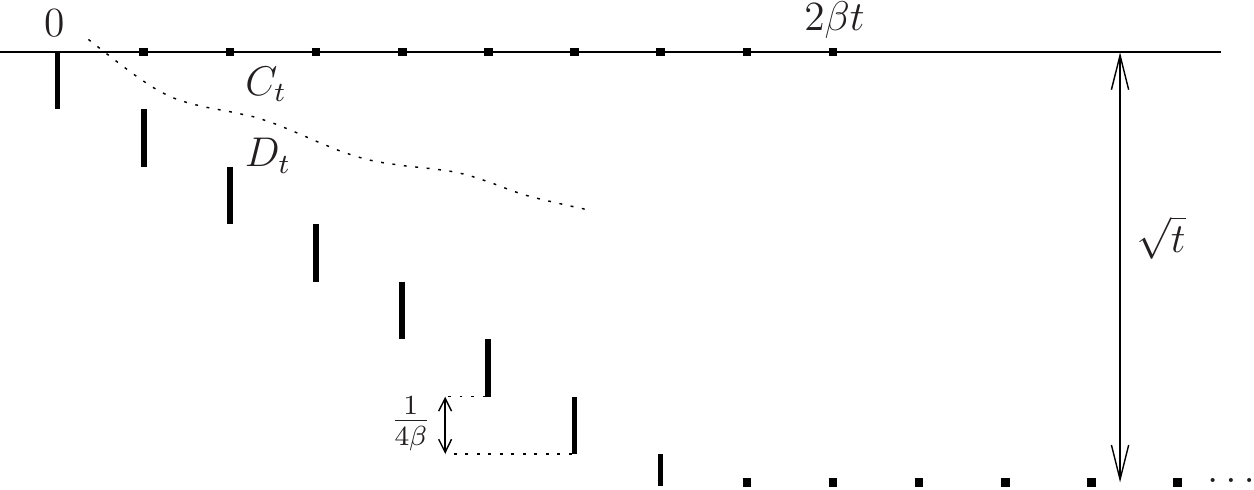}
\caption{Sets $C_t$ and $D_t$}
\label{figctdt}
\end{figure}

\begin{lemma}
\label{l3}
For any $\beta$, let
\(
p_\beta := \sup_{t \geqslant 1} \Pb\left( D_t \rightsquigarrow C_t \right)
.
\)
Then $p_\beta<1$.
\end{lemma}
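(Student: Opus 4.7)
The plan is to use the classical exponential decay of subcritical contact process connectivity as the driving estimate: since $\lambda<\lambda_c$, there exist constants $\alpha,C>0$ such that
\[
\Pb((0,0)\rightsquigarrow(y,t))\leq C e^{-\alpha(|y|+t)},\qquad y\in\Z,\ t\geq 0,
\]
and in particular $\E|\eta^0_t|\leq Ce^{-\alpha t}$. I split $D_t=D_t^{(1)}\cup D_t^{(2)}$, with $D_t^{(1)}$ the diagonal staircase and $D_t^{(2)}$ the horizontal ray at time $-\sqrt{t}$, and bound each contribution to $\Pb(D_t\rightsquigarrow C_t)$ separately via a union bound.

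For the ray $D_t^{(2)}$, I sum the two-point estimate over all pairs $(x,-\sqrt{t})\in D_t^{(2)}$ and $(y,0)\in C_t$:
\[
\Pb(D_t^{(2)}\rightsquigarrow C_t)\leq\sum_{x\geq 4\beta\sqrt{t}}\sum_{y=1}^{2\beta t}Ce^{-\alpha(\sqrt{t}+|x-y|)}\leq C'\,t\,e^{-\alpha\sqrt{t}},
\]
which is $o(1)$ as $t\to\infty$ and bounded uniformly for $t\geq 1$. For the staircase $D_t^{(1)}$ I exploit the time-reversal symmetry of the graphical construction: writing $\eta$ for the forward contact process from the sites $\{1,\dots,2\beta t\}$ started at time $0$, the event $\{D_t^{(1)}\rightsquigarrow C_t\}$ has the same probability as $\{\lfloor 4\beta s\rfloor\in\eta_s\text{ for some }s\in[0,\sqrt{t}]\}$. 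This event can begin only at one of two types of entry instants: a $\lambda$-mark on an edge adjacent to the threshold column $\lfloor 4\beta s\rfloor$ with the neighbouring site already infected, or a threshold jump at $s=k/(4\beta)$ landing on an already infected site. Using the bound $\Pb(z\in\eta_s)\leq C''e^{-\alpha s}$, obtained by summing the two-point decay over the $2\beta t$ starting positions, a first-moment calculation gives that the expected number of entry instants is at most a finite constant $K_\beta$ depending only on $\beta$ and $\lambda$.

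The main obstacle is to conclude that the total bound is strictly less than $1$ uniformly in $t$, since $K_\beta$ need not a priori satisfy $K_\beta<1$. I close the gap via a blocking argument: pair the entry-point count with an explicit positive-probability event in the Poisson marks near the corner $(0,0)$ that severs any short-time path from the staircase into $C_t$, and combine with the exponential decay on longer time scales. An alternative would be to apply the BK inequality to the two structurally ``disjoint'' entry-point types so as to obtain a product bound tighter than the first-moment one. Either refinement, combined with the $o(1)$ control on $D_t^{(2)}$, yields $\sup_{t\geq 1}\Pb(D_t\rightsquigarrow C_t)<1$.
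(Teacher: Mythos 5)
Your splitting of $D_t$ into the horizontal ray and the staircase, and your treatment of the ray via two-point exponential decay summed to give an $o(1)$ contribution, both match the paper. The genuine gap is exactly where you flagged it: the staircase. A first-moment bound on the number of ``entry instants'' only yields a finite constant $K_\beta$, which does not bound a probability below $1$, and the two fixes you sketch do not close this. The proposed BK step runs in the wrong direction for your purpose (BK gives an \emph{upper} bound on the probability of disjoint occurrence, which cannot turn a divergent union bound into something less than $1$), and the ``blocking argument'' is precisely the crux of the lemma, asserted rather than proved.

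The paper avoids first-moment estimates on the staircase altogether and instead bounds the \emph{complementary} probability from below. Setting
\[
q_\beta = \Pb\left( (x,-u) \not\rightsquigarrow L_0^+ \text{ for any } x=0,1,2,\dots \text{ and } u \geqslant x/4\beta \right),
\]
one observes that this is a countable intersection of decreasing events indexed by $x$. The FKG inequality gives the product lower bound
\[
q_\beta \geqslant \prod_x \Pb\left( (x,-u) \not\rightsquigarrow L_0^+ \text{ for any } u \geqslant x/4\beta \right),
\]
and this product is strictly positive because each factor is positive, and for large $x$ each factor is exponentially close to $1$ thanks to Lemma~\ref{lemmaexp} (the subcritical exponential decay applied to a half-line target from distance $x$ after at least time $x/4\beta$). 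Crucially, $q_\beta$ is independent of $t$ because neither the staircase nor $L_0^+$ depends on $t$. One then combines, again via FKG (on the two complementary decreasing events), the strictly positive $q_\beta$ with the $o(1)$ bound for the ray to get $\sup_{t\geq1}\Pb(D_t\rightsquigarrow C_t)<1$. The essential idea your proposal is missing is this multiplicative, FKG-based lower bound on the complement, which converts an infinite family of slightly-less-than-one factors into a single strictly positive constant; a union bound on the event itself cannot do this.
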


The key step in proving Proposition~\ref{prop-hasbreakpoints} is to observe that~$X$ is good with high probability, so that one can apply Lemma~\ref{lemmafavorable} combined with the following fact.

\begin{lemma}
\label{l4}
If a path $\gamma$ in the time interval $[0,t]$ has at least $k$ disjoint favorable intervals in $[0,\frac{t}{2}]$, then
\(
\Pb \left( R \leqslant \tfrac{t}{2} \,\big|\, \Gamma=\gamma \right) \geqslant 1-p_\beta^k
\text{ for all } t \geqslant 1.
\)
\end{lemma}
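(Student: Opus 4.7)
The plan is to identify, at each of the $k$ favorable intervals $I_j = [s_j - \sqrt t, s_j)$, a candidate break point $(\gamma(s_j), s_j)$ and show that its failure is controlled by a local ``$D_t \rightsquigarrow C_t$'' connection; these $k$ events can then be combined via disjointness and an FKG argument to produce the $p_\beta^k$ bound.

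Write $\gamma_j := \gamma(s_j)$ and define the cones
\[
R_j := \{(x,u) : u \in [s_j - \sqrt t, s_j], \; x \geq \gamma_j + 4\beta(s_j - u)\},
\]
whose lower-left boundaries are exactly the translates $D_t + (\gamma_j, s_j)$ and whose tops contain $C_t + (\gamma_j, s_j)$. The favorable condition $|\gamma(u) - \gamma_j| \leq 4\beta(s_j - u)$ ensures that $R_j$ lies weakly to the right of $\gamma$; since the $I_j$ are disjoint, so are the $R_j$. If $(\gamma_j, s_j)$ is not a break point, there is an open path from $L_0$ into $C_t + (\gamma_j, s_j)$. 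Tracing it backwards from time $s_j$, its last entry into $R_j$ must cross the boundary $D_t + (\gamma_j, s_j)$, and the subsequent segment is an open path from $D_t + (\gamma_j, s_j)$ to $C_t + (\gamma_j, s_j)$ confined to $R_j$. Let $E_j$ denote the event that such a confined open connection exists. By translation invariance, $\Pb(E_j) \leq \Pb(D_t \rightsquigarrow C_t) \leq p_\beta$, and the above shows $\{R > \tfrac{t}{2}\} \cap \{\Gamma = \gamma\} \subseteq \bigcap_j E_j$.

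Each $E_j$ depends only on the Poisson marks inside $R_j$, and the $R_j$ are pairwise disjoint, so unconditionally $\Pb(\bigcap_j E_j) = \prod_j \Pb(E_j) \leq p_\beta^k$. To upgrade to the conditional statement $\Pb(\bigcap_j E_j \mid \Gamma = \gamma) \leq p_\beta^k$, I would decompose $\{\Gamma = \gamma\}$ into the event that $\gamma$ is an open $\lambda$-path (determined by marks on and immediately adjacent to $\gamma$, disjoint from the interiors of the $R_j$) and the decreasing event that no open path from $(X,0)$ reaches $L_t$ while running strictly to the right of $\gamma$ at some time. The latter, restricted to any $R_j$, is a decreasing event in the marks of $R_j$; since each $E_j$ is increasing, FKG inside $R_j$ gives $\Pb(E_j \mid \text{decreasing in } R_j) \leq \Pb(E_j) \leq p_\beta$, and independence across the disjoint $R_j$ then delivers the conditional $p_\beta^k$ bound, completing the proof.

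The main obstacle is this final step: cleanly disentangling the global, non-monotone conditioning $\{\Gamma = \gamma\}$ into components that interact nicely with the increasing events $E_j$. This requires careful bookkeeping of the boundary marks shared between $\gamma$ and each $R_j$ (Poisson arrows live at midpoints $\gamma(u) \pm \tfrac13$), as well as a precise description of how $\{\Gamma = \gamma\}$ restricts to each $R_j$, so that the FKG inequality applies honestly within each cone.
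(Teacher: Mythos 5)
Your overall strategy — locating candidate break points at the favorable intervals, converting a failed candidate into a local crossing of the form $D_t \rightsquigarrow C_t$, and combining FKG with independence across disjoint regions — is exactly the paper's, and you correctly flag the treatment of the conditioning on $\{\Gamma=\gamma\}$ as the delicate step. The gap is in how you propose to carry out that step: the decreasing event $\mathcal D$ in your decomposition of $\{\Gamma=\gamma\}$ is one \emph{global} constraint on the whole right side of $\gamma$, not a product of per-cone constraints, so applying FKG ``inside each $R_j$'' and then multiplying the $k$ resulting bounds is not justified. Once you condition on $\mathcal D$, the Poisson marks in the distinct cones are no longer independent, and $\mathcal D$ does not decompose into pieces supported on the disjoint $R_j$'s. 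A second, smaller issue: your cones $R_j$ lie only \emph{weakly} to the right of $\gamma$, so they can share boundary marks with $\gamma$ itself, which threatens the independence you need between $\bigcap_j E_j$ and $\{\gamma\text{ is open}\}$.

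The fix, which is what the paper actually does, is to apply FKG once and globally. Writing $\{\Gamma=\gamma\}=\{\gamma\text{ open}\}\cap H_\gamma^c$, with $H_\gamma^c$ a decreasing event living entirely in $D_\gamma^+$ (strictly to the right of $\gamma$), the first factor drops out of the conditioning because $D_\gamma$ and $D_\gamma^+$ are disjoint. The break-point failure events $J_{y,s,\gamma}$ are also defined so as to depend only on $U\cap D_\gamma^+$. A single FKG application to the pair of decreasing events $\bigcup J^c$ and $H_\gamma^c$ removes the conditioning; only \emph{afterwards} does the paper pass to the unconstrained crossings $\boldsymbol z_j + D_t \rightsquigarrow \boldsymbol z_j + C_t$, whose independence comes from living in disjoint time strips $\R\times(t_j-\sqrt t, t_j]$. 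In your notation the correct order is: $\bigcap_j E_j$ is increasing and $H_\gamma^c$ is decreasing, hence $\Pb\left(\bigcap_j E_j\mid\Gamma=\gamma\right)=\Pb\left(\bigcap_j E_j\mid H_\gamma^c\right)\le\Pb\left(\bigcap_j E_j\right)=\prod_j\Pb(E_j)\le p_\beta^k$, by one global FKG and then independence of disjoint regions — no per-cone FKG is needed, and the factorization problem you were wrestling with disappears.
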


In order to prove the above lemma, we will attach a copy of~$C_t$ and~$D_t$ to disjoint pieces of~$\Gamma$ corresponding to a favorable time interval, and observe that, in order to find a break point, it suffices to have $D_t \not\rightsquigarrow C_t$, see Figure~\ref{figcrossingdt}.
Knowing the path~$\Gamma$ gives negative information about connectivity properties to the right of~$\Gamma$ itself, which by the FKG inequality will increase the probability of the event $D_t \not\rightsquigarrow C_t$.

We are ready to prove Proposition~\ref{prop-hasbreakpoints}.

\begin{proof}
[Proof of Proposition~\ref{prop-hasbreakpoints}]
Let $H$ denote the event that~$\Gamma$ makes less than~$\beta t$ jumps.

By Lemmas~\ref{lemmafavorable} and~\ref{l4},
\[
\Pb\left( R \leqslant \tfrac{t}{2} \right)
\geqslant
\E \left[ \Pb\left( R \leqslant \tfrac{t}{2}, H \,\big|\, \Gamma \right) \right]
\geqslant
\big( 1 - {p_\beta}^{\frac{\sqrt{t}}{4}-1} \big) \cdot \Pb(H)
.
\]
By Lemma~\ref{lemma3}, we can choose~$\beta$ large enough so that $\Pb(H) \geqslant \Pb(G_X) \to 1$ as $t\to\infty$, and the result follows.
\end{proof}

We finish this section with the proof of the lemmas above.

\begin{proof}
[Proof of Lemma~\ref{lemmafavorable}]
We split time interval $[0,\frac{t}{2})$ into a collection of favorable and non-favorable intervals as follows.
Let $t_0=\frac{t}{2}$ and let
\[
v=\inf \big\{u \geq 0: \gamma \mbox{ has more than } 4 \beta u \mbox{ jumps in time interval }
[t_0-u,t_0) \big\}.
\]
If $v>\sqrt{t}$, the interval $[t_0-\sqrt{t},t_0)$ is favorable and we let $t_1=t_0-\sqrt{t}$.
If not, we declare the interval $[t_0-v,t_0)$ non-favorable, and let $t_1=t_0-v$.
We then continue with $t_1$ playing the role of $t_0$, to find $I_2=[t_2,t_1)$ which may be favorable or non-favorable, and so on.
This algorithm is performed until we reach a $t_i<\sqrt{t}$.

Let $L$ denote the sum of the lengths of the non-favorable intervals among $I_1,\dots,I_i$.
Note that a non-favorable interval of length $\ell$ has at least $4\beta\ell$ jumps, so $4 \beta L \le \beta t$ and therefore $L\leq \frac{t}{4}$.
Hence the sum of the lengths of the favorable intervals among $I_1,\dots,I_i$
is at least
$\frac{t}{2}-\sqrt{t}-\frac{t}{4}$ and there must be at least $\frac{\sqrt{t}}{4}-1$ favorable intervals among the $I_j$'s.
\end{proof}

\begin{proof}
[Proof of Lemma~\ref{l4}]
Let $D_\gamma$ be the closed set given by the union of the horizontal and vertical segments of $\gamma$.
Then $(\R \times [0,t]) \setminus D_\gamma$ has two components: $D_\gamma^+$ to the right and $D_\gamma^-$ to the left.

We note that $\left\{ \Gamma = \gamma \right\} = \left\{ \gamma \text{ is open} \right\} \cap H_\gamma^c$, where
\[
H_\gamma =
\left\{ L_0 \rightsquigarrow L_t \text{ in } D_\gamma^+ \right\}
\cup
\left\{ L_0 \rightsquigarrow \gamma \text{ in } D_\gamma^+ \right\}
\cup
\left\{ \gamma \rightsquigarrow L_t \text{ in } D_\gamma^+ \right\}
\cup
\left\{ \gamma \rightsquigarrow \gamma \text{ in } D_\gamma^+ \right\}
.
\]
Here the last event means that there is an open path starting and ending at different points of $\gamma$, whose existence is determined by the configuration~$\omega\cap D_\gamma^+$, see Figure~\ref{figgamma}.
\begin{figure}[ht!]
\centering
\includegraphics[height=40mm]{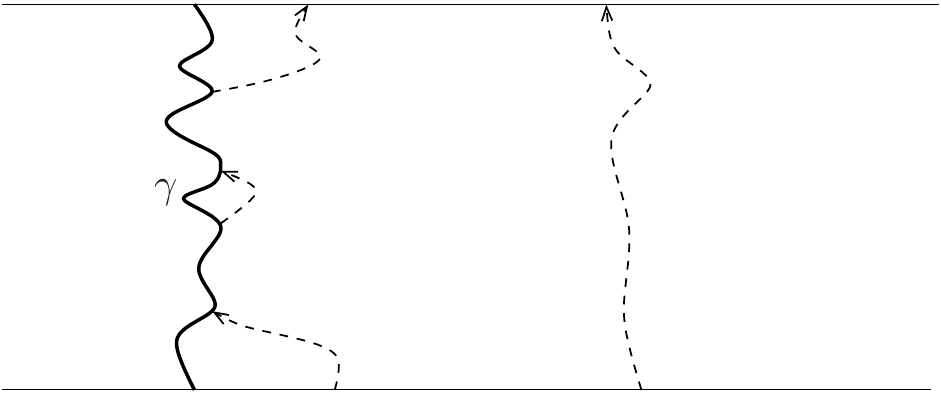}
\caption{The event $H_\gamma$ occurs if a path such as these four is open}
\label{figgamma}
\end{figure}

On the other hand, $\left\{ \Gamma = \gamma\right\} \cap \left\{(y,s) \text{ is a break point} \right\} = \left\{ \Gamma=\gamma \right\} \cap J_{y,s,\gamma}^c$, where
\[
J_{y,s,\gamma} =
\left\{ \gamma \rightsquigarrow (y,s)+C_t \text{ in } D_\gamma^+ \right\}
\cup
\left\{ L_0 \rightsquigarrow (y,s)+C_t \text{ in } D_\gamma^+ \right\}
,
\]
see Figure~\ref{figbreak}.

\begin{figure}[ht!]
\centering
\includegraphics[height=40mm]{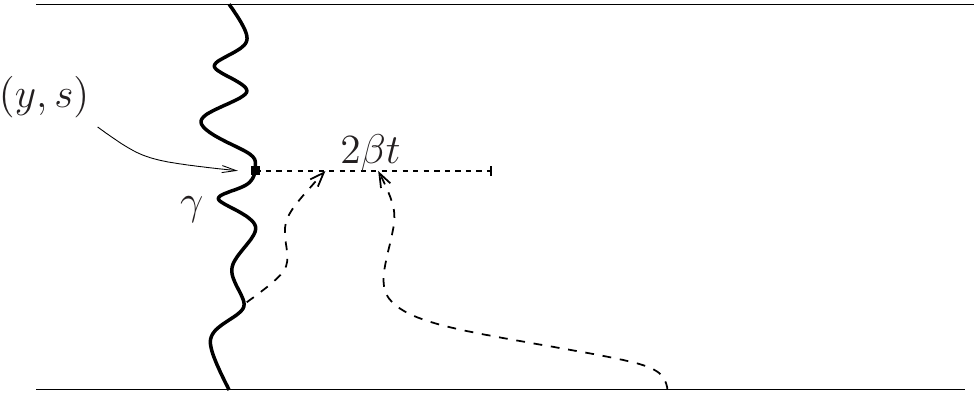}
\caption{The event $J_{y,s,\gamma}$ occurs if a path such as these two is open}
\label{figbreak}
\end{figure}

Now the event $\left\{ \gamma \text{ is open} \right\}$ depends on $U \cap D_\gamma$ and the events $H_\gamma$ and $J_{y,s,\gamma}$ depend on $U \cap D_\gamma^+$.
Since $D_\gamma$ and $D_\gamma^+$ are disjoint,
\begin{align*}
\Pb\left( R \leqslant \tfrac{t}{2} \,\big|\, \Gamma=\gamma \right)
& = \Pb\left( J_{y,s,\Gamma}^c \text{ for some } s\in[0,\tfrac{t}{2}], y=\Gamma(s) \,\big|\, \Gamma=\gamma \right)
\\
& = \Pb\left( J_{y,s,\gamma}^c \text{ for some } s\in[0,\tfrac{t}{2}], y=\gamma(s) \,\big|\, \Gamma=\gamma \right)\\
& = \Pb\left( J_{y,s,\gamma}^c \text{ for some } s\in[0,\tfrac{t}{2}], y=\gamma(s) \,\big|\, \{\gamma \text{ is open }\} \cap H_\gamma^c \right).
\\
& =
\Pb\left( J_{y,s,\gamma}^c \text{ for some } s\in[0,\tfrac{t}{2}], y=\gamma(s) \,\big|\, H_\gamma^c \right).
\end{align*}
Finally, applying the FKG inequality to the last line,
\[
\Pb\left( R \leqslant \tfrac{t}{2} \,\big|\, \Gamma=\gamma \right)
\geqslant
\Pb\left( J_{y,s,\gamma}^c \text{ for some } s\in[0,\tfrac{t}{2}], y=\gamma(s) \right)
.
\]
From now on we drop the subindex $\gamma$ from $J$.
Let $\sqrt{t} \leq t_1<t_2<\cdots<t_k \leqslant \frac{t}{2}$ be such that $t_{j} \geqslant t_{j-1} + \sqrt{t}$ and $[t_j-\sqrt{t},t_j)$ is a favorable interval for $\gamma$.
Write $\boldsymbol{z}_j = (t_j,\gamma(t_j))$.

By definition of favorable interval and of the set $D_t$, we have that $\boldsymbol{z}_j + D_t \subseteq D_\gamma^+ \cup D_\gamma$.
On the other hand, if $J_{\boldsymbol{z}_j}$ occurs then $\boldsymbol{z}_j + D_t \rightsquigarrow \boldsymbol{z}_j + C_t$, see Figure~\ref{figcrossingdt}.

\begin{figure}[ht!]
\centering
\includegraphics[width=130mm]{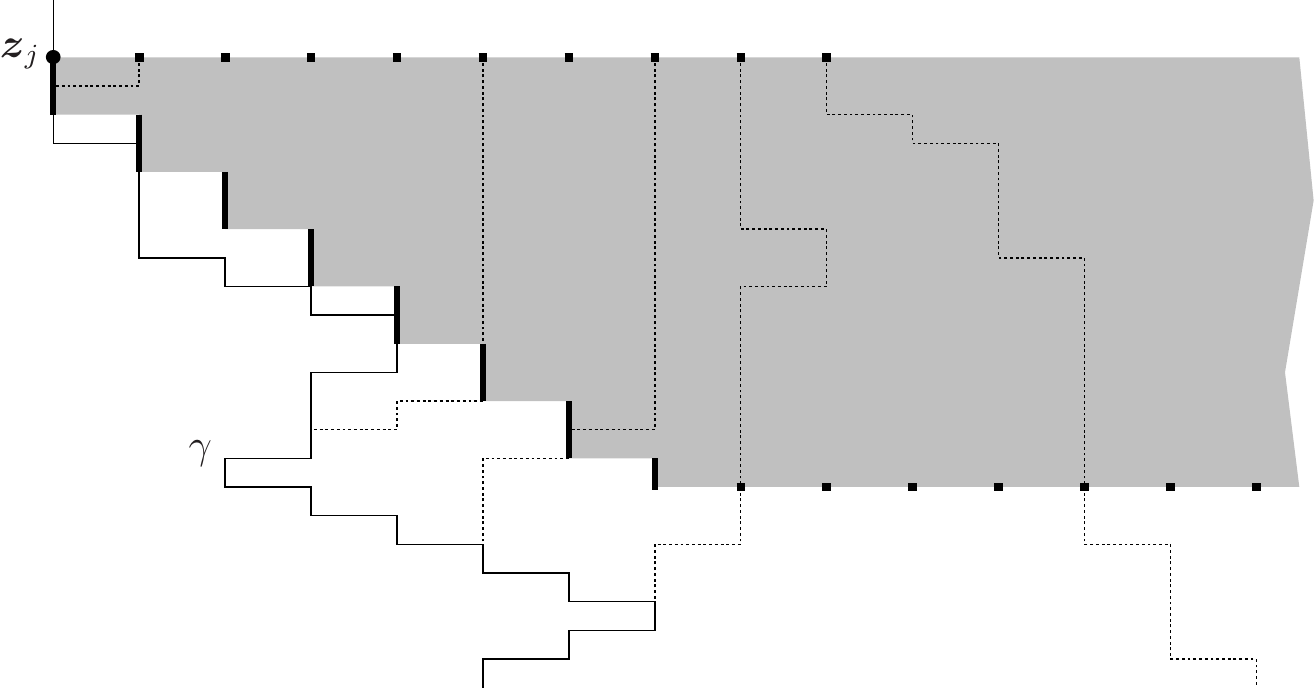}
\caption{The event $J_{\boldsymbol{z}_j}$ implies that $\boldsymbol{z}_j + D_t \rightsquigarrow \boldsymbol{z}_j + C_t$}
\label{figcrossingdt}
\end{figure}

Since these events depend on $U \cap \big(\R \times (t_j-\sqrt{t},t_j]\big)$, which are disjoint as $j$ goes from 1 to $k$, we have that
\[
\Pb\left( R \leqslant \tfrac{t}{2} \,\big|\, \Gamma=\gamma \right)
\geqslant
1-\Pb\left( \boldsymbol{z}_j + D_t \rightsquigarrow \boldsymbol{z}_j + C_t \text{ for all } j \right)
=
1 - \Pb\left( D_t \rightsquigarrow C_t \right)^k
\geqslant
1 - p_\beta^k
\]
by Lemma~\ref{l3}, which finishes the proof.
\end{proof}

\begin{proof}
[Proof of Lemma~\ref{l3}]
This lemma is a simple consequence of Lemma~\ref{lemmaexp} below.
We give a full proof for convenience.
If $D_t \rightsquigarrow C_t$ then either $(x,-u) \rightsquigarrow C_t$ for some $x\in\Z$ and $u=\sqrt{t}$, or $(x,-u) \rightsquigarrow L_0^+$ for some $x=0,1,2,\dots$ and $u \geqslant x/4\beta$, where $L_0^+ = \{1,2,3,\dots\}\times\{0\}$.
Using \eqref{bound.above.nonabsorption} and summing over $y \in C_t$, the
probability of the first event is bounded by $2 \beta t e^{-\alpha \sqrt{t}}$.
For the second event, using FKG inequality and Lemma~\ref{lemmaexp} below we get
\begin{multline*}
q_\beta
:=
\Pb\left( (x,-u) \not\rightsquigarrow L_0^+ \text{ for any } x=0,1,2,\dots \text{ and } u \geqslant x/4\beta \right)
\geqslant
\\
\geqslant
\prod_x
\Pb\left( (x,-u) \not\rightsquigarrow L_0^+ \text{ for any } u \geqslant x/4\beta\right)
>
0
\end{multline*}
does not depend on~$t$.
By the FKG inequality, $p_\beta \leqslant \sup_{t \geqslant 1} 2\beta t
e^{-\alpha \sqrt{t}} (1-q_\beta) < 1$.
\end{proof}

\begin{lemma}
\label{lemmaexp}
For large enough $t$,
\(
\Pb \left( \big. L_0 \rightsquigarrow (0,s) \text{ for some } s \geqslant t \right) \leqslant e^{-\alpha t/2}
.
\)
\end{lemma}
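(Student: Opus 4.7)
The plan is to control this probability via the expected Lebesgue measure of the random set $V := \{s \ge 0 : L_0 \rightsquigarrow (0,s)\}$ restricted to $[t,\infty)$, combined with the observation that whenever $V$ is non-empty beyond time~$t$, the ``current'' component of $V$ has residual length at least an $\mathrm{Exp}(1)$ random variable. Then a Fubini computation reduces everything to the single-time bound already available from~\eqref{bound.above.nonabsorption}.

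First I will recall the single-time exponential estimate. By time-reversal invariance of the graphical construction (recovery marks and undirected edge marks form Poisson processes whose joint law is invariant under reflection of time), we have $\Pb((y,0) \rightsquigarrow (0,s)) = \Pb(y \in \eta_s^0)$, so summing over $y \in \Z$ yields $\Pb(L_0 \rightsquigarrow (0,s)) = \Pb(\eta_s^0 \ne \emptyset) \le e^{-\alpha s}$ by~\eqref{bound.above.nonabsorption}.

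Next I exploit the geometry of $V$. If $s_1 < s_2$ are two consecutive recovery marks at site~$0$, then $s \mapsto \I(s \in V)$ is constant on $[s_1, s_2)$: any open path from $L_0$ to $(0,s_1)$ extends vertically to $(0, s')$ for any $s' \in [s_1, s_2)$, and conversely. Consequently, on the event $\{V \cap [t,\infty) \ne \emptyset\}$, letting $T^* := \inf(V \cap [t,\infty))$, the interval $[T^*, T^* + X)$ lies entirely in $V$, where $X$ is the waiting time from $T^*$ to the next recovery mark at~$0$. Since $T^*$ is a stopping time for the natural filtration of the graphical construction---because $\{T^* \le u\}$ is determined by $U \cap (\R \times [0,u])$---the strong Markov property of the rate-$1$ Poisson process of recovery marks at~$0$ ensures that $X$ is $\mathrm{Exp}(1)$ and independent of $\F_{T^*}$, in particular of the event $\{T^* < \infty\}$.

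Combining these two inputs, $\E |V \cap [t,\infty)| \ge \E[X \cdot \I(T^* < \infty)] = \Pb(T^* < \infty)$, while by Fubini $\E|V \cap [t,\infty)| = \int_t^\infty \Pb(L_0 \rightsquigarrow (0,s))\,\dd s \le e^{-\alpha t}/\alpha$. Hence $\Pb(V \cap [t,\infty) \ne \emptyset) \le e^{-\alpha t}/\alpha \le e^{-\alpha t/2}$ for all $t$ large enough, proving the lemma. The only mildly delicate point is the stopping-time/strong-Markov justification used to peel off the $\mathrm{Exp}(1)$ factor, but this is standard once one observes that $V \cap [0,u]$ is measurable with respect to the restriction of $U$ to the strip $\R \times [0,u]$.
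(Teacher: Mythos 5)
Your proof is correct, and it follows a genuinely different route from the paper's.

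Both arguments share the single-time estimate $\Pb(L_0 \rightsquigarrow (0,s)) \le e^{-\alpha s}$ obtained from \eqref{bound.above.nonabsorption} via time reversal, and both exploit the same structural fact about the graphical construction, namely that once $(0,s)\in V$ the connection persists until the next recovery mark at the origin. The paper turns this into the inequality
\(
\Pb\left( L_0 \rightsquigarrow (0,s) \text{ for some } s \in [t,t+1] \right) \le e\cdot\Pb\left( L_0 \rightsquigarrow (0,t+1) \right),
\)
via the FKG inequality applied to the two increasing events ``connection somewhere in $[t,t+1]$'' and ``no recovery mark at $0$ in $[t,t+1]$'', and then sums over unit blocks. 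You instead work in continuous time: you integrate $\Pb(s\in V)$ over $s\ge t$ by Fubini, and bound this expected Lebesgue measure from below by $\Pb(V\cap[t,\infty)\ne\emptyset)$ using the strong Markov property at the hitting time $T^*=\inf(V\cap[t,\infty))$ to peel off an $\mathrm{Exp}(1)$ factor. This is a cleaner book-keeping device (no geometric series, no FKG), at the small cost of two points you should make explicit: first, that $T^*\in V$ almost surely on $\{T^*<\infty\}$, which holds because for $s>0$ the set $V$ is a.s.\ a countable union of half-open intervals $[a_i,b_i)$ whose left endpoints are horizontal jump times and whose right endpoints are recovery marks at $0$; and second, that a.s.\ $T^*$ does not coincide with a recovery mark (the relevant Poisson processes are disjoint a.s.), so that the residual time $X$ to the next recovery mark is indeed nondegenerate. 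With these two remarks added, the argument is complete and gives the bound $e^{-\alpha t}/\alpha$, which is of the same quality as the paper's and implies the stated $e^{-\alpha t/2}$ for large $t$ since $\alpha>0$ in the subcritical regime.
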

\begin{proof}
On the one hand the existence of a QSD ${\nu}$, Proposition~\ref{prop:yaglom}, implies
\begin{equation}
\label{bound.above.nonabsorption}
\Pb (L_{0} \rightsquigarrow (0,t))
=
\Pb(\mathbf{0} \rightsquigarrow L_t)
=
\Pb(\tau^0>t)
\le
\Pb(\tau^{\nu}>t)
=
e^{-\alpha t}.
\end{equation}
On the other hand
\begin{align*}
\Pb\left( \big. L_0 \rightsquigarrow (0,t+1) \right)
&
\geqslant
\Pb\left( \big. L_0 \rightsquigarrow (0,s) \text{ for some } s \in [t,t+1] \right)
\Pb\left( \big. U^x \cap [t,t+1] = \emptyset \right)
\end{align*}
and $\Pb\left( \big. U^x \cap [t,t+1] = \emptyset \right) = e^{-1}$, whence
\begin{align*}
\Pb \left( \big. L_0 \rightsquigarrow (0,s) \text{ for some } s \geqslant t \right)
&
\leqslant
\sum_n
\Pb \left( \big. L_0 \rightsquigarrow (0,s) \text{ for some } s \in [t+n,t+n+1] \right)
\\
&
\leqslant
e \cdot
\sum_n
\Pb \left( \big. L_0 \rightsquigarrow (0,t+n+1) \right)
\leqslant
e^{-\alpha t/2}
\end{align*}
for $t$ large enough.
\end{proof}

\section{Yaglom limit for the set infected by a single site}
\label{sec:yaglom}

In this section we prove Proposition~\ref{prop:yaglom}, building upon Chapter~3 of the second author's PhD thesis~\cite{ezanno12}.

We start by recalling some properties of jump processes on countable spaces which are almost-surely absorbed but positive recurrent when conditioned on non-absorption, known as \emph{$R$-positive} or \emph{$\alpha$-positive} processes.
In the sequel we define the finite contact process modulo translations, extending to~$\Z^d$ the concept of ``seen from the edge''. We then discretize time appropriately to obtain some moment control using exponential decay, showing that it satisfies some probabilistic criteria for $R$-positiveness which ultimately implies the desired result.

\subsection{Positive recurrence of conditioned processes}

Let~$\Lambda$ be a countable set and consider a Markov jump process $(\zeta_t)_{t \ge 0}$ on $\Lambda \cup \{\emptyset\}$ such that~$\Lambda$ is an irreducible class and $\emptyset$ is an absorbing state which is reached almost-surely.
The sub-Markovian transition kernel restricted to~$\Lambda$ is written as $P_t(A,A')=\Pb(\zeta^A_t=A')$, a matrix doubly-indexed by~$\Lambda$ and continuously parametrized by~$t$.

A measure~$\mu$ on~$\Lambda$ is seen as a row vector, and a real function $f$ as a column vector, so that
\(
 \mu P_t f = 	\E f  (\zeta_t^\mu  ).
\)
With this notation, ${\mu}$ is quasi-stationary if and only if ${\mu} P_t = e^{-\alpha(\mu) t} {\mu}$.

By~\cite[Theorem~1]{kingman-63b} there exists $\alpha >0$ with the property that $t^{-1} \log P_t(A,A') \to -\alpha$ as $t\to\infty$ for every $A,A' \in \Lambda$. The semi-group $(P_t)$ is said to be \emph{$\alpha$-positive} if
\[
 \limsup_{t \to \infty} e^{\alpha t} P_t(A,A) >0.
\]
In this case, by~\cite[Theorem 4]{kingman-63b} there exist a measure ${\nu}$ and a positive
function $h$, both unique modulo a multiplicative constant, such that
\[
\nu P_t = e^{-\alpha t} {\nu},
\qquad
P_t h = e^{-\alpha t} h.
\]
Moreover, $\nu h < \infty$.
If in addition $\nu$ is summable, then it can be normalized to become a probability measure on~$\Lambda$, and the Yaglom limit follows from the result below.
\begin{theorem}
\label{thm:verejones}
If an irreducible sub-Markovian standard semi-group $(P_t)_{t\ge 0}$ on a countable space $\Lambda$ is $\alpha$-positive with summable normalized left-eigenvector~$\nu$,
then
\[
\lim_{t\to\infty}\frac{P_t(A,A')}{P_t(A,\Lambda)} = \nu(A'), \quad  \forall A,A' \in \Lambda.
\]
\end{theorem}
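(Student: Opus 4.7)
The plan is to apply a Doob $h$-transform that converts $(P_t)$ into an honest positive-recurrent Markov semigroup with an explicit stationary distribution, and then read off the ratio limit from its convergence to equilibrium.

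Using $\alpha$-positivity, take the strictly positive right eigenfunction $h$ satisfying $P_t h=e^{-\alpha t}h$ and define
\[
 \tilde P_t(A,B) := e^{\alpha t}\,\frac{h(B)}{h(A)}\,P_t(A,B).
\]
A direct check using $P_t h=e^{-\alpha t}h$ shows that $\tilde P_t$ is an honest Markov semigroup. A dual check using $\nu P_t=e^{-\alpha t}\nu$ together with $\nu(\Lambda)=1$ and $\nu h<\infty$ yields that $\pi(B):=\nu(B)h(B)/(\nu h)$ is a stationary probability measure for $\tilde P_t$, and $\pi$ is positive everywhere on $\Lambda$ since both $\nu$ and $h$ are. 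Irreducibility is inherited from $(P_t)$ and no periodicity issue arises in continuous time, so $\tilde P_t$ is an irreducible positive-recurrent Markov semigroup, and the classical convergence to equilibrium yields $\tilde P_t(A,\cdot)\to\pi$ in total variation. Moreover, the $L^1(\pi)$-ergodic theorem for positive-recurrent chains applied to the non-negative, integrable function $1/h$ (which lies in $L^1(\pi)$ exactly because $\sum_B\pi(B)/h(B)=1/(\nu h)$) gives $\tilde P_t(1/h)\to\pi(1/h)=1/(\nu h)$ in $L^1(\pi)$; because $\pi$ charges every state of the countable space $\Lambda$, $L^1(\pi)$-convergence is in fact pointwise, since each term of the non-negative sum $\sum_A\pi(A)|\tilde P_t(1/h)(A)-1/(\nu h)|$ must vanish.

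For the ratio limit, write
\[
 \frac{P_t(A,A')}{P_t(A,\Lambda)} = \frac{\tilde P_t(A,A')/h(A')}{\tilde P_t(1/h)(A)}.
\]
The numerator converges to $\pi(A')/h(A')=\nu(A')/(\nu h)$ by the total-variation limit, while the denominator converges to $\pi(1/h)=1/(\nu h)$ by the $L^1(\pi)$-ergodic step. Dividing the pointwise limits yields $\nu(A')$, as claimed.

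The main obstacle is the $L^1(\pi)$-ergodic convergence of $\tilde P_t(1/h)$ for the unbounded function $1/h$, since total-variation convergence of $\tilde P_t(A,\cdot)$ to $\pi$ only immediately transfers to bounded test functions. I would close this gap by truncation: for $f_M:=\min(1/h,M)$, total-variation convergence combined with bounded convergence gives $\tilde P_t f_M\to\pi(f_M)$ in $L^1(\pi)$; stationarity of $\pi$ then gives $\|\tilde P_t(1/h-f_M)\|_{L^1(\pi)}=\pi(1/h-f_M)$, so the truncation error is controlled uniformly in~$t$ and vanishes as $M\to\infty$, yielding the required $L^1(\pi)$-convergence.
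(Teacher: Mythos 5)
Your proposal is correct and follows essentially the same route as the paper: both use the Doob $h$-transform to turn $(P_t)$ into an honest positive-recurrent Markov semigroup with stationary measure proportional to $\nu h$, invoke convergence to equilibrium, and then recover the ratio limit. The only place the two arguments diverge is the technical step justifying convergence of $P_t(A,\Lambda)$ (equivalently, of $\tilde P_t(1/h)(A)$): the paper notes that $\nu P_t = e^{-\alpha t}\nu$ yields the uniform domination $e^{\alpha t}P_t(A,A')\le\nu(A')/\nu(A)$, summable in $A'$, and applies dominated convergence in the second coordinate, whereas you use a truncation of $1/h$ together with stationarity of $\pi$ to get the same uniform-integrability control; both are sound and the difference is cosmetic.
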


\begin{proof}
We reprove this classical result~\cite{seneta-vere-jones-66,vere-jones-69} for the reader's convenience.

Let $\boldsymbol{1}$ denote the unit column vector, and choose $\nu$ and $h$ so that
\[
P_t h = e^{-\alpha t} h,
\quad
\nu P_t = e^{-\alpha t} \nu,
\quad
\nu \boldsymbol{1} = 1,
\quad
\nu h = 1.
\]
Let $H$ denote the diagonal matrix corresponding to $h$.
The \emph{$h$-transform} of $P_t$ is
\[
Q_t=e^{\alpha t} H^{-1} P_t H.
\]
Since $\nu H Q_t = \nu H$, $Q_t \boldsymbol{1} = \boldsymbol{1}$ and $(Q_t)_t$ is a multiplicative semi-group, it defines a Markov process on~$\Lambda$ with invariant measure $\nu H$.

It follows from the $\alpha$-positiveness of $(P_t)_t$ that $Q_t \not\to 0$, thus it is positive recurrent and hence
$Q_t \to \boldsymbol{1} \nu H$
as $t\to\infty$.
Therefore,
\(
e^{\alpha t} P_t \to h \nu \text{ as } t \to \infty.
\)
Summing over the second coordinate we have
\(
e^{\alpha t} P_t \boldsymbol{1} \to h
.
\)
That is,
\[
e^{\alpha t} P_t(A,A') \to h(A) \nu(A')
\quad
\text{and}
\quad
e^{\alpha t} P_t(A,\Lambda) \to h(A)
.
\]
Therefore we get
\[
\frac{P_t(A,A')}{P_t(A,\Lambda)}
\to
\frac{h(A){\nu}(A')}{h(A)} = {\nu}(A').
\]

It remains to justify that summation over the second coordinate preserves the
limit. Since $e^{\alpha t}\nu P_t = \nu$ we get for every $t\ge 0$ and $A'\in\Lambda$
\[
e^{\alpha t}P_t(A,A') =  \frac{e^{\alpha t} \nu(A)  P_t(A,A')}{\nu(A)}\le \frac{\nu(A')}{\nu(A)},
\]
which is summable over $A'$.
The limit thus follows by dominated convergence.
\end{proof}

\subsection{Finite contact process modulo translations}

For the contact process on~$\Z^d$ in arbitrary dimension $d \ge 1$, the concept of ``{seen from the edge}'' is generalized by considering the process modulo translations.
We say that two configurations~$\eta$ and~$\eta'$ in the space $\{ A \subseteq \Z^d : A \text{ is non-empty and finite} \}$ are \emph{equivalent} if $\eta=\eta'+y$ for some $y\in\Z^d$.
Let~$\Lambda$ denote the quotient space resulting from this equivalence relation.
We will denote by~$\zeta$ the equivalence class of a configuration~$\eta$, or indistinguishably any representant of such class when there is no confusion.

Since the evolution rules of the contact process are translation-invariant, the process $(\zeta_t)_{t\ge 0}$ obtained by projecting $(\eta_t)_{t\ge 0}$ onto~$\Lambda \cup \{\emptyset\}$ is a homogeneous Markov process with values on $\Lambda \cup \{\emptyset\}$.
Moreover, the subset~$\Lambda$ is an irreducible class, and the absorbing state~$\emptyset$ is almost-surely reached if $\lambda<\lambda_c$.
We call $(\zeta_t)_{t\ge 0}$ the \emph{contact process modulo translations}.

For $d=1$ this is the same as taking $\Lambda = \{ A \subseteq -\N_0 : A \text{ is finite and } 0\in A \}$.
Therefore, Proposition~\ref{prop:yaglom} is the specialization to $d=1$ of the next result.

\begin{proposition}
\label{prop:zdyaglom}
Let $(\zeta_t)_{t\geqslant 0}$ denote the contact process modulo translations on~$\Z^d$ with subcritical infection parameter~$\lambda$.
This process has a unique minimal quasi-stationary distribution~${\nu}$.
Moreover the Yaglom limit
\(
\L ( \zeta_t^A \,|\, \tau^A>t ) \to {\nu} \text{ as } t\to\infty
\)
holds for any finite non-empty initial configuration~$A$.
\end{proposition}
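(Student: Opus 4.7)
The plan is to apply Theorem~\ref{thm:verejones}. I first observe that $(\zeta_t)$ is irreducible on $\Lambda$, since single births and recoveries already connect any two finite non-empty classes in one step of $P_1$ with positive probability, and subcriticality ensures almost-sure absorption at $\emptyset$. Kingman's theorem then furnishes a common decay rate $\alpha>0$, so the only substantive task is to verify that $(P_t)$ is $\alpha$-positive with summable left eigenvector.

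My strategy is to work first with the discrete skeleton $P_1$, a sub-Markovian kernel on $\Lambda$, and check that it satisfies the hypotheses under which Ferrari, Kesten and Martínez~\cite{ferrari-kesten-martinez-96} prove $R$-positivity, existence of a summable QSD and Yaglom convergence for probabilistic automata. Their argument requires three ingredients, each available here. The first is the monotone graphical construction of Section~\ref{sec:graphical.construction}, which yields the supermultiplicativity $\Pb(\tau^{A_0}>n+m)\ge \Pb(\tau^{A_0}>n)\Pb(\tau^{A_0}>m)$ for $A_0=[\{\mathbf{0}\}]$ and hence, by Fekete, the upper bound $\Pb(\tau^{A_0}>n)\le e^{-\alpha n}$. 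The second is the pointwise subcritical decay $\Pb(\mathbf{0}\rightsquigarrow(x,n))\le Ce^{-c(|x|+n)}$. The third is a tightness estimate for the conditioned laws $\L(\zeta_n^{A_0}\mid \tau^{A_0}>n)$, controlling the diameter of $\eta_n^{A_0}$ under conditioning on $\{\tau^{A_0}>n\}$.

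From such tightness one extracts by Prokhorov and Cesaro averaging a subsequential limit $\nu$, which by standard soft arguments is a probability measure on $\Lambda$, left-invariant under $e^{\alpha}P_1$, and supported on finite configurations. Summability of $\nu$ then follows from the exponential decay of $\nu(\{A:\mathrm{diam}(A)\ge r\})$ inherited from the tightness estimate, together with the polynomial growth in $r$ of the number of translation classes of diameter $r$. Positivity of the matching right eigenvector, and hence $R$-positivity of $P_1$, is obtained by FKM's renewal construction. Passing from $R$-positive discrete skeleton to $\alpha$-positive continuous semi-group is routine via the factorisation $P_t=P_{\lfloor t\rfloor}P_{t-\lfloor t\rfloor}$ and positivity of $P_{s}(A_0,A_0)$ for $s\in[0,1)$, the eigenvector $\nu$ remaining the same. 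Theorem~\ref{thm:verejones} then gives the Yaglom limit for every $A\in\Lambda$, while uniqueness of the minimal QSD follows because any QSD $\mu$ has decay rate $\alpha(\mu)\ge\alpha$ by Kingman, with equality forcing $\mu$ to lie in the one-dimensional left eigenspace spanned by $\nu$.

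The main obstacle is the tightness step. Conditioning on $\{\tau^{A_0}>n\}$, an event of probability $e^{-\alpha n+o(n)}$, can \emph{a priori} distort the geometry of $\eta_n^{A_0}$, and dividing a crude union bound on $\mathrm{diam}(\eta_n^{A_0})$ by $\Pb(\tau^{A_0}>n)$ is insufficient because we lack a matching lower bound of order $e^{-\alpha n}$. In the spirit of the proof of Lemma~\ref{lemma4}, I would split $\{\mathrm{diam}(\eta_n^{A_0})\ge r\}\cap\{\tau^{A_0}>n\}$ into two disjoint open connections, a distant one responsible for the large diameter and one responsible for the survival, and apply BK to factorise them; the pointwise subcritical decay then contributes an extra factor $e^{-cr}$ which survives the ratio with $\Pb(\tau^{A_0}>n)$. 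This is the technical step that Ferrari--Kesten--Martínez confront in the discrete setting, and the graphical construction of Section~\ref{sec:graphical.construction} together with the FKG and BK inequalities of Section~\ref{sec:fkgbk} should allow an essentially parallel treatment in continuous time.
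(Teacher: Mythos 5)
Your route is genuinely different from the paper's, and it has a real gap at the step you yourself single out as the main obstacle. The paper never controls the process conditioned on survival: it fixes a time scale $\psi$ with $\E|\eta^0_\psi|<1$, proves the \emph{unconditioned} moment estimate of Proposition~\ref{prop:recurrence} (domination of $|\eta^A_\psi|$ by a sum of $|A|$ i.i.d.\ copies of $|\eta^0_\psi|$, an $L^q$ law of large numbers, and a telescoping of conditional expectations), and uses it to verify the abstract hypotheses (H1)--(H3) of Theorem~\ref{thm:h1h2h3} for the $\psi$-skeleton with $\Lambda'=\{A:|A|\le K\}$ and $A'=\{0\}$; $R$-positivity and summability of $\nu$ then come from that theorem as a black box, and the Yaglom limit from Theorem~\ref{thm:verejones}. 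You instead propose to rebuild the Ferrari--Kesten--Mart\'inez construction itself: supermultiplicativity, pointwise exponential decay, tightness of $\L(\zeta_n^{A_0}\mid\tau^{A_0}>n)$, Ces\`aro limits, and their renewal construction for the right eigenvector. The paper explicitly warns that this part of \cite{ferrari-kesten-martinez-96} (their Theorem~2 and its proof) relies on the bounded range of the discrete automaton and cannot be applied to the contact process, whose range over any positive time interval is unbounded; so both your tightness step and your later appeal to ``FKM's renewal construction'' are precisely the pieces that must be reproved here, not cited.

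Concretely, the BK fix you sketch does not deliver uniform-in-$n$ tightness. Splitting $\{\mathrm{diam}(\eta_n^{A_0})\ge r\}$ at a branching space-time point $(z,s)$ and applying BK gives, after summing over the endpoints and over $z$, a bound of the shape $C e^{-cr}\sum_{s\le n}\E|\eta^0_s|\,\E|\eta^0_{n-s}|$. The best unconditional inputs available are $\E|\eta^0_u|\le C u^d\,\Pb(\tau^0>u)$ (spatial exponential decay) and the supermultiplicative inequality $\Pb(\tau^0>s)\,\Pb(\tau^0>n-s)\le\Pb(\tau^0>n)$, which yield $\Pb\big(\mathrm{diam}(\eta_n^{A_0})\ge r\ \big|\ \tau^{A_0}>n\big)\le C\,n^{2d+1}e^{-cr}$: the polynomial prefactor in $n$ ruins tightness for fixed $r$. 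Removing it would require a matching lower bound $\Pb(\tau^0>n)\ge c\,e^{-\alpha n}$ or a ratio-limit theorem, i.e.\ information of exactly the kind one is trying to establish (this is what the $o(n)$ in $\Pb(\tau^0>n)=e^{-\alpha n+o(n)}$ hides, and what Fekete alone cannot give, since it only provides the upper bound $e^{-\alpha n}$). Even granting tightness, you would still have to produce a positive right eigenvector and upgrade subsequential to full convergence without FKM's bounded-range renewal argument. The lesson of the paper's proof is that all of this can be bypassed: hypothesis (H1) only concerns the unconditioned probability of remaining outside $\Lambda'$ for $n$ steps, which Proposition~\ref{prop:recurrence} provides for any prescribed $\rho<R^{-1}$, while (H2) follows from $\Pb(\tau^A>n)\le|A|\,\Pb(\tau^0>n)$ and (H3) from an explicit one-step computation. (Minor point: for a QSD $\mu$ one has $\alpha(\mu)\le\alpha$, not $\ge$; minimality means maximal $\alpha(\mu)$, and uniqueness then follows from uniqueness of the $\alpha$-invariant measure in the $\alpha$-positive case.)
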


Let $(\xi_n)_n$ denote an irreducible, aperiodic, discrete-time Markov chain on the state-space~$\Lambda \cup \{\emptyset\}$, with transition matrix~$p(\cdot,\cdot)$ such that the absorbing time~$\tau^A=\inf\{n:\xi_n^A=\emptyset\}$ is a.s.\ finite.
As for continuous-time chains discussed above, there is~$R$ such that $p^n(A,A) = R^{-n+o(n)}$, and we say that~$p$ is \emph{$R$-positive} if $\limsup R^{n}p^n(A,A)>0$.
The proof of Proposition~\ref{prop:zdyaglom} will be based on the following criteria for $R$-positiveness.

\begin{theorem}
[{\cite[Theorem~1]{ferrari-kesten-martinez-96}}]
\label{thm:h1h2h3}
Suppose that there exist a subset $\Lambda'\subseteq \Lambda$, a configuration $A'\in\Lambda'$, some $\rho<R^{-1}$, and positive constants~$M$ and~$\varepsilon$ such that
\\
\textnormal{(H1)}
For all $A\in\Lambda'$ and $n\ge 0$, $\Pb(\tau^{A}>n;\, \xi^{A}_1,
\dots,\xi^{A}_{n} \notin \Lambda') \le M \rho^{n}$;
\\
\textnormal{(H2)}
For all $A\in\Lambda'$ and $n\ge 0$, $\Pb(\tau^{A}>n) \le 
M \, \Pb(\tau^{A'}>n)$;
\\
\textnormal{(H3)}
For all $A \in \Lambda'$, $\Pb(\xi^A_n=A' \text{ for some } n\le M) \ge
\varepsilon$.
\\
Then $p$ is $R$-positive and its left eigenvector~$\nu$ is summable.
\end{theorem}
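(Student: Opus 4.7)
The plan is a Vere--Jones-type generating-function analysis organized around returns to the distinguished state $A'$. Write $f_n = \Pb(\xi^{A'}_k \ne A' \text{ for } 1 \le k < n,\ \xi^{A'}_n = A')$ for the first-return probabilities and $F(z) = \sum_n f_n z^n$; the renewal identity reads $G(z) := \sum_n p^n(A',A') z^n = (1-F(z))^{-1}$, and by definition of the convergence parameter the radius of convergence of $G$ is exactly $R$. The whole task is to convert (H1)--(H3) into sharp control of $F$ at and past $R$.

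The first step is to show that, starting from $A'$, the first return to $A'$ satisfies $\Pb(\tau_{A'} > n,\ \tau^{A'} > n) \le C \rho_1^n$ for some $\rho_1 < R^{-1}$. The argument combines (H1) and (H3) via the strong Markov property: iterated (H1) at successive first-entries into $\Lambda'$ bounds the excursions outside $\Lambda'$ by a geometric tail of rate $\rho$, while (H3) ensures that on each visit to $\Lambda'$ the chain has an independent chance of at least $\varepsilon$ to reach $A'$ within the next $M$ steps. Composing these yields a geometric (parameter $\varepsilon$) number of blocks, each of length at most $M$ plus an excursion with geometric tail, giving the claim. In particular $F$ extends analytically to the disk $|z| < \rho_1^{-1}$, strictly past $R$.

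Since $F$ has non-negative coefficients, is analytic strictly past $R$, and $G = (1-F)^{-1}$ has radius of convergence exactly $R$, the only possibility is $F(R)=1$ ($R$-recurrence): otherwise $G$ would either be analytic past $R$ itself, or develop a pole strictly inside $(0,R)$, each contradicting that the radius of $G$ equals $R$. Term-by-term differentiation then gives $F'(R) = \sum_n n f_n R^n < \infty$, which by the classical Vere--Jones criterion is $R$-positivity, producing strictly positive eigenvectors $\nu,h$ with $\nu p = R^{-1}\nu$ and $ph = R^{-1} h$. For summability of $\nu$, I would decompose paths from $A'$ at their last visit to $A'$ to obtain the convolution $\Pb(\tau^{A'} > n) = \sum_{k=0}^n p^k(A',A')\, q^*_{n-k}$, where $q^*_m = \Pb(\tau_{A'} > m, \tau^{A'} > m) \le C\rho_1^m$ by Step 1. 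Since $R^m q^*_m$ is summable and $R^k p^k(A',A')$ is bounded (converging, by Vere--Jones), the convolution bound yields $R^n \Pb(\tau^{A'} > n) \le C'$ uniformly in $n$. Fatou's lemma applied to $\sum_A R^n p^n(A',A) = R^n \Pb(\tau^{A'} > n)$, combined with the pointwise limit $R^n p^n(A',A) \to h(A')\nu(A)$, yields $h(A')\sum_A \nu(A) \le C' < \infty$. Hypothesis (H2) enters naturally to propagate this bound uniformly from $A'$ to arbitrary $A \in \Lambda'$.

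The main obstacle is the first step: the iterated (H1)/(H3) argument has to be carried out with care so as to keep joint control of the return time and the non-absorption event, and in particular to verify that the resulting rate $\rho_1$ is strictly smaller than $R^{-1}$. This is exactly what the strict inequality $\rho < R^{-1}$ built into (H1) is designed to enable.
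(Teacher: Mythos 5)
The paper itself does not prove this statement: it is imported verbatim as Theorem~1 of Ferrari--Kesten--Mart\'inez~\cite{ferrari-kesten-martinez-96}, so there is no in-text proof to compare against. Evaluated on its own, your high-level route is the standard and correct one---renewal decomposition at $A'$, the identity $G=(1-F)^{-1}$, the Pringsheim-type argument forcing $F(R)=1$ together with $F'(R)<\infty$, and the last-visit-plus-Fatou argument for summability of $\nu$. Steps~2 and~3 are sound once Step~1 is granted. Step~1, however, contains a genuine gap.

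The iterated (H1)/(H3) block argument you sketch produces, at best, a tail of rate $(1-\varepsilon)^{1/M}$ for the time to return to $A'$: the number of blocks alone is geometric of parameter $\varepsilon$, and each block consumes at least $M$ steps. Nothing in the hypotheses forces $(1-\varepsilon)^{1/M}<R^{-1}$; the strict inequality $\rho<R^{-1}$ in (H1) controls only the excursion durations \emph{outside} $\Lambda'$, not the number of failed attempts to hit $A'$ from within $\Lambda'$. Concretely, take $\Lambda=\Lambda'=\{A',B\}$ with $p(A',B)=p(A',\emptyset)=\tfrac12$ and $p(B,A')=p(B,B)=\tfrac12$. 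Then (H1) is vacuous, and (H3) holds with $M=2$, $\varepsilon=\tfrac14$, so the block bound gives rate $(3/4)^{1/2}\approx 0.866$. But here $F(z)=\tfrac{z^2/4}{1-z/2}$, so $R=\sqrt{5}-1$ and $R^{-1}=(1+\sqrt{5})/4\approx 0.809<0.866$. The return-time tail actually decays at rate $\tfrac12<R^{-1}$, so the \emph{conclusion} of Step~1 is true in this example, but the block argument does not establish it, and varying $M$ does not help ($(1-\varepsilon(M))^{1/M}$ stays above $0.85$ for all $M$). So your appeal to ``$\rho<R^{-1}$ is exactly what is designed to enable'' the bound does not address the dominant obstruction.

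Relatedly, (H2) plays essentially no role in your argument, being invoked only vaguely at the end, whereas it is a substantive hypothesis aimed precisely at the scenario your Step~1 must exclude: a chain lingering at states of $\Lambda'$ with substantially slower absorption than $A'$, which would drag the return-time rate up toward (or past) $R^{-1}$. Any repaired version of Step~1 must make real use of (H2), and the resulting estimate is more delicate than a geometric block count; alternatively, one can follow the generating-function comparisons in the original Ferrari--Kesten--Mart\'inez argument, which do not route through an a priori exponential bound strictly below $R^{-1}$.
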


The following proposition provides a set of configurations~$\Lambda'$ and an appropriate time discretization that satisfy the above criteria.
It is analogous to Theorem~2 in~\cite{ferrari-kesten-martinez-96}, but since the range of interaction of the contact process is infinite for any positive period of time, we cannot apply the latter directly.
We give a simpler proof instead.

\begin{proposition}
\label{prop:recurrence}
For a subcritical contact process~$(\eta_t)_{t \ge 0}$ on~$\Z^d$, there exists a time $\psi>0$ with the following property.
For every $\rho>0$, one can find constants~$K$ and~$M$ such that
\begin{equation}
\label{recurrence}
\Pb\left( |\eta^A_{\psi}| >K,\dots, |\eta^A_{n \psi}|>K \right) \le M \rho^n
\end{equation}
for all $A \subseteq \Z^d $ with $1 \le |A|\le K$ and $n\ge 1$.
\end{proposition}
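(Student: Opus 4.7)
The plan is to combine exponential decay of the subcritical contact process with a subadditive control on the population size coming from the graphical construction, and then to iterate via the Markov property. From the attachment property $\eta^A_t = \bigcup_{x \in A} \eta^x_t$ one immediately obtains $|\eta^A_t| \le \sum_{x \in A} |\eta^x_t|$, and hence, by translation invariance, $\E|\eta^A_t| \le |A| \cdot \E|\eta^0_t|$. In the subcritical phase, the single-seed expected size decays exponentially, $\E|\eta^0_t| \le C e^{-\alpha t}$ for some $\alpha, C > 0$; this is a classical consequence of subcriticality, established independently of Proposition~\ref{prop:yaglom} so as to avoid circularity. I would fix $\psi$ large enough that $c := C e^{-\alpha \psi}$ is strictly less than $1$; iterating the single-step estimate at multiples of $\psi$ via the Markov property then gives $\E|\eta^A_{n\psi}| \le |A| \, c^n$ for every $n \ge 1$.

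With this in hand, the first bound comes from Markov's inequality together with the trivial inclusion $\{|\eta^A_{k\psi}|>K\text{ for all }k\le n\}\subseteq\{|\eta^A_{n\psi}|>K\}$:
\[
\Pb\bigl(|\eta^A_{k\psi}|>K\text{ for all }k\le n\bigr)
\le \frac{\E|\eta^A_{n\psi}|}{K}
\le \frac{|A|\,c^n}{K}
\le c^n,
\]
whenever $1 \le |A| \le K$. This already settles the proposition for every $\rho \ge c$ with $M = 1$, so the remaining task is to handle the regime $\rho < c$.

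For $\rho < c$, the plan is to exploit the fact that the event of interest forces $K+1$ distinct particles to persist for $n$ consecutive steps, not just one. Intuitively, in the subcritical phase, $K+1$ descendant lines have to survive in parallel, each contributing an independent-like factor of decay. Formally, I would apply a BK-type disjoint occurrence argument to decompose the event into disjoint open paths from $A \times \{0\}$ to $K+1$ distinct endpoints at time $n\psi$, each such path having probability at most $e^{-\alpha n\psi}$; the resulting bound is of order $c^{n(K+1)}$ up to polynomial corrections in $K$ and $n$. Given any $\rho > 0$, one then chooses $K$ large enough that $c^{K+1} < \rho$, and $M$ large enough to absorb the polynomial prefactor, which yields the desired inequality.

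The main obstacle is precisely this last step: the naive first-moment iteration saturates at the rate $c$, and beating this rate requires a genuinely multi-particle argument. The subtlety is that descendant lines of different particles in a single graphical construction are not independent — they share arrows and recovery marks and are positively correlated through the FKG inequality — so one must carefully extract the required independence through a BK-type decomposition into disjoint witnesses. This multi-particle decoupling is the heart of the proof.
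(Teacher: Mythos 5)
Your first-moment treatment of the regime $\rho \ge \E|\eta^0_\psi|$ is correct, but the BK decomposition you propose for the hard regime has a genuine gap. The event $\{A\times\{0\}\rightsquigarrow(y_i,n\psi),\ i=1,\ldots,K+1\}$ does \emph{not} imply disjoint occurrence of these $K+1$ connections: open paths from a single seed to several endpoints generically share a common trunk in the graphical construction before branching, so there are no disjoint witnesses and the BK inequality does not apply. Worse, the bound you are after is simply false: conditioned on non-extinction, $\eta^0_{n\psi}$ converges to the Yaglom limit $\nu$, which gives positive mass to $\{|B|>K\}$ for every $K$, so $\Pb(|\eta^A_{n\psi}|>K)\ge\Pb(|\eta^0_{n\psi}|>K)\ge c_K\,\Pb(\eta^0_{n\psi}\ne\emptyset)\asymp e^{-\alpha n\psi}$ with $c_K>0$. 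Thus the last-slice event alone cannot decay faster than $e^{-\alpha\psi}$ per step, and no argument that looks only at time $n\psi$ can reach an arbitrarily small step-rate $\rho$.

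The proposition is only true because the intersection over \emph{all} intermediate slices $k\psi$, $1\le k\le n$, is much smaller than its last slice, and the paper's proof exploits each of these constraints. The mechanism is not disjointness but concentration via higher moments: from the stochastic domination of $|\eta^A_\psi|$ by a sum of $|A|$ i.i.d.\ copies of $|\eta^0_\psi|$ and an $L^q$ law of large numbers, one gets $\E\bigl[\,|\eta^A_\psi|^q/|A|^q\,\bigr]<\rho$ whenever $|A|>K$, provided $q$ is large enough that $(\E|\eta^0_\psi|)^q<\rho$ and $K$ is large enough for the $L^q$ concentration to have set in. Writing the target probability via a telescoping product of ratios $|\eta^A_{k\psi}|^q/|\eta^A_{(k-1)\psi}|^q$ and conditioning on $\{|\eta^A_{k\psi}|>K\}$ at each step then extracts a factor of $\rho$ per step. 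Raising to the $q$-th power turns the step-rate $c=\E|\eta^0_\psi|$ into essentially $c^q$, which is exactly the improvement you hoped to get from BK; but it is available only because the intermediate sizes are conditioned to stay large, so the law of large numbers applies at every step.
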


\begin{proof}
A consequence of the exponential
decay for the set of points infected from the origin
is that
$\int_0^{\infty}[\E|\eta_t^0|^q] \dd t< \infty$ for every $q > 0$~\cite[(1.13)-(1.14)]{bezuidenhout-grimmett-91}.
Therefore we can choose a time $\psi>0$ such that $\E|\eta_\psi^0|<1$, and such that $\E|\eta_\psi^0|^q<\infty$ for every~$q$.

We claim that $|\eta^A_t|$ is stochastically bounded by the sum of $|A|$ independent copies of~$|\eta^0_t|$, which we denote by $\eta^{(i)}_t$, $i=1,\dots,|A|$.
The proof of this fact is standard and will be omitted.
By the Law of Large Numbers in $L^q$,
\[
\frac{|\eta^A_\psi|}{|A|}
\ \stackrel{\text{st}}{\leq} \
\frac{1}{|A|} \sum_{i=1}^{|A|} |\eta^{(i)}_\psi|
\stackrel{L^q}{\longrightarrow}\ \E|\eta_\psi^0| \text{ as } |A|\to \infty.
\]

Let $\rho>0$. Choosing $q$ large so that $(\E|\eta_\psi^0|)^q < \rho$, there exist $C$ and $K$ such that
\begin{equation}
\label{eq:moment}
\E\frac{|\eta^A_\psi|^q}{|A|^q} \le C \text{ for all } A\in\Lambda
\quad
\text{ and moreover }
\quad
\E\frac{|\eta^A_\psi|^q}{|A|^q} \le \rho \text{ whenever } |A|>K.
\end{equation}

Writing $\xi_n$ for $\eta_{n \psi}$, for any $A\in\Lambda$ with $|A|\le K$,
\begin{align*}
\Pb\big(|\xi^A_{1}|>K & ,\dots,  |\xi^A_{n}|>K\big)
\le
\frac{1}{K^q}\E\left[|\xi^A_{n}|^q ; |\xi^A_{1}|>K,\dots,
|\xi^A_{n}|>K\right]
\\
&
=
\frac{|A|^q}{K^q}
\E\left[ \frac{|\xi^A_1|^q}{|\xi^A_0|^q}\cdots \frac{|\xi^A_{n-1}|^q}{|\xi^A_{n-2}|^q} \frac{|\xi^A_n|^q}{|\xi^A_{n-1}|^q} ; |\xi^A_{1}|>K,\dots, |\xi^A_{n}|>K \right ]
\\
&
=
\frac{|A|^q}{K^q}
\E \left\{ \E\left[ \frac{|\xi^A_1|^q}{|\xi^A_0|^q}\cdots
\frac{|\xi^A_n|^q}{|\xi^A_{n-1}|^q} ; |\xi^A_{1}|>K,\dots,|\xi^A_{n}|>K \ \Bigg|\  \xi_1,\dots,\xi_{n-1} \right ] \right\}
\\
&
\le
\E\left[ \frac{|\xi^A_1|^q}{|\xi^A_0|^q}\cdots \frac{|\xi^A_{n-1}|^q}{|\xi^A_{n-2}|^q} ; |\xi^A_{1}|>K,\dots, |\xi^A_{n-1}|>K \right ]
\cdot
{ \sup_{|A'|>K} \E \big( \textstyle \frac{|\xi^{A}_n|^q}{|\xi^{A}_{n-1}|^q} \big| \xi^{A}_{n-1}=A' \big) }
\\
&
\le
\rho \cdot \E\left[ \frac{|\xi^A_1|^q}{|\xi^A_0|^q}\cdots
\frac{|\xi^A_{n-1}|^q}{|\xi^A_{n-2}|^q} ; |\xi^A_{1}|>K,\dots,
|\xi^A_{n-1}|>K \right ]
\\
& \le
\cdots
\le
\rho^{n-1}
\cdot
\E \left[ \frac{|\xi^A_1|^q}{|\xi^A_0|^q} ; |\xi^A_{1}|>K \right]
\le
\rho^{n-1}
\cdot
\sup_{A'} \E \frac{|\xi^{A'}_1|^q}{|A'|^q}
\le C \rho^{n-1}.
\end{align*}
We have used~\eqref{eq:moment} $n$ times here.
Writing $M=\frac{C}{\rho}$ the result follows.
\end{proof}

Finally we prove Proposition~\ref{prop:zdyaglom} using the previous results.

\begin{proof}
[Proof of Proposition~\ref{prop:zdyaglom}]
Let~$\psi$ be given by Proposition~\ref{prop:recurrence}.
We now consider the discrete-time chain given by $\xi_n=\zeta_{n \psi}$, $n=0,1,2,\dots$, which has decay rate $R = e^{\alpha \psi}$.
Choose $\rho < R^{-1}$.
By Proposition~\ref{prop:recurrence}, there are~$K$ and~$M$ such that~(\ref{recurrence}) holds, which implies~(H1) with $\Lambda'=\{A \in \Lambda : |A|\le K\}$.

Take $A'=\{0\}$ and observe that $\Pb(\tau^A>n)\le |A| \cdot \Pb(\tau^0>n)$, which implies~(H2).

Finally,~(H3) follows from
\[
\Pb(\zeta^A_\psi=\{0\})\ge e^{-\psi}e^{-2 d \lambda |A|\psi}(1-e^{-\psi})^{|A|-1} \ge \varepsilon,
\quad \text{for every } A \in \Lambda',
\]
where $\varepsilon = [e^{\psi + 2d\lambda\psi}(1-e^{-\psi})]^K > 0$.

By Theorem~\ref{thm:h1h2h3}, the matrix $P_\psi$ is $R$-positive with summable left-eigenvector~$\nu$.
Therefore the semi-group $(P_t)_{t\ge 0}$ is $\alpha$-positive with the same left-eigenvector.
By Theorem~\ref{thm:verejones} $\frac{P_t(A,A')}{P_t(A,\Lambda)} \to \nu(A'), \ \forall A,A' \in \Lambda$, proving Proposition~\ref{prop:zdyaglom}.
\end{proof}

\section*{Acknowledgments}

We thank Santiago Saglietti for helpful suggestions.


\providecommand{\bysame}{\leavevmode\hbox to3em{\hrulefill}\thinspace}
\providecommand{\MR}{\relax\ifhmode\unskip\space\fi MR }
\providecommand{\MRhref}[2]{%
  \href{http://www.ams.org/mathscinet-getitem?mr=#1}{#2}
}
\providecommand{\href}[2]{#2}


\end{document}